\newcommand{\id}{\mathds{1}}
\newtheorem{thm}{Theorem}
\newtheorem*{thm*}{Theorem} 
\newtheorem{lemma}{Lemma}
\newtheorem{cor}{Corollary}
\newtheorem{prop}{Proposition}
\theoremstyle{definition}
\newtheorem{question}{Question}
\theoremstyle{remark}
\newtheorem{rem}{Remark} 
\newtheorem{ex}{Example}
\newcommand{\mr}{{\mathbb R}}
\newcommand{\mn}{{\mathbb N}}
\newcommand{\mc}{{\mathbb C}}
\renewcommand{\rho}{\varrho}
\newcommand{\eps}{\varepsilon}
\renewcommand{\Im}{\operatorname{Im}}
\renewcommand{\Re}{\operatorname{Re}}
 \newcommand{\Dom}{\operatorname{D}} 
\newcommand{\ran}{\operatorname{Ran}}
\newcommand{\hil}{\mathcal{H}}
\newcommand{\bdd}{\mathcal{B}}
\newcommand{\num}{\operatorname{Num}}
\newcommand{\cnum}{\overline{\operatorname{Num}}}
\newcommand{\dom}{\operatorname{D}}
\renewcommand{\ker}{\operatorname{Ker}}
\begin{document}
   
\title[Non-round points of the boundary of the numerical range]{On non-round points of the boundary of the numerical range and an application to non-selfadjoint Schr\"odinger operators}

\author[M. Hansmann]{Marcel Hansmann}
\address{Faculty of Mathematics\\ 
Chemnitz University of Technology\\
Chemnitz\\
Germany.}
\email{marcel.hansmann@mathematik.tu-chemnitz.de}

 \begin{abstract}
We show that non-round boundary points of the numerical range of an unbounded operator (i.e. points where the boundary has infinite curvature) are contained in the spectrum of the operator. Moreover, we show that  non-round boundary points, which are not corner points, lie in the essential spectrum. This generalizes results of H\"ubner, Farid, Spitkovsky and Salinas and Velasco for the case of bounded operators.

We apply our results to non-selfadjoint Schr\"odinger operators, showing that in this case the boundary of the numerical range can be non-round only at points where it hits the essential spectrum.
 \end{abstract}  

\subjclass[2010]{47A10, 47A12, 35J10} 
\keywords{numerical range, corners, infinite curvature, Schr\"odinger operator, non-selfadjoint, complex potentials}   
 
\maketitle 

\section{Introduction}
\emph{When is a boundary point of the numerical range of a linear operator contained in the spectrum of the operator?} This is the main question this paper is concerned with. Let us recall that the numerical range of a linear operator $A$ in the complex Hilbert space $(\hil, \langle . , . \rangle)$ is defined as 
\begin{equation}
  \label{eq:14}
  \num(A) = \{ \langle Af,f \rangle : f \in \dom(A), \|f\|=1\}
\end{equation}
and that by the Toeplitz-Hausdorff theorem it is always a convex set. In the finite dimensional case the numerical range is also compact, but in the above generality it needs neither be bounded nor closed. In most circumstances (more about that below) the spectrum of $A$ is contained in the closure of the numerical range. 

The above question already has a long history and goes back, in the infinite-dimensional case, to a paper of Donoghue \cite{MR0096127}, who dealt with bounded operators and  showed that corner points of the boundary of the  numerical range, if they are elements of the numerical range, are eigenvalues of the operator. For corner points which are not elements of the numerical range, Hildebrandt \cite{MR0200725} (and also Sims \cite{MR0338802}) showed that they are contained in the approximate point spectrum. Some decades later, H\"ubner \cite{MR1371343} generalized Hildebrandt's result to points where the boundary of the numerical range is non-round (i.e. where the boundary has infinite curvature). Even more recently, H\"ubner's results where refined, somewhat independently, by Farid \cite{MR1731863}, Spitkovsky 
\cite{MR1752163} and Salinas and Velasco \cite{MR1800238}. Among other things, they showed that non-round points of the boundary of the numerical range, which are not corner points, are elements of the essential spectrum. 

All of the above results have one feature in common: they were proved for bounded operators only. It is the aim of this paper to extend these results, as far as possible, to the case of unbounded operators. As we will see, this is not a straightforward task since most of the proofs of the above results (only the proof of Donoghue's theorem being an exception) rely heavily on the boundedness of the operators and on the fact that they are everywhere defined. 
 
In the second part of this paper, we will apply the results of the first part to the study of non-selfadjoint Schr\"odinger operators $-\Delta + V$ in $L^2(\mr^d)$, with a complex valued potential $V$. While these operators have received a lot of attention in recent times, see e.g. \cite{MR1819914, MR2260376, MR2540070, MR2559715, MR2596049, MR2651940, FLS11, Frank11, Hansmann11, MR3016473, MR3077277}, as compared to their selfadjoint counterparts they are still far from being well understood. For instance, while it is not too difficult to obtain bounds on the numerical range of these operators (see \cite{b_Davies}), it is usually quite hard, or impossible, to determine the numerical range exactly. We will shed a little more light on the structure of the numerical range of $-\Delta + V$ by showing that under very mild assumptions on the imaginary part of the potential, the boundary of the numerical range can be non-round only at points where it hits the essential spectrum. In particular, if the spectrum is purely discrete, then the boundary of the numerical range will consist entirely of round points. 
    
The plan for this paper is as follows: In Section 2, we gather some preliminary material about convex sets and their boundary points. In Section 3 and 4 we will state and prove our general results about boundary points of the numerical range. Finally, the non-selfadjoint Schr\"odinger operator will be considered in Section 5. 

\section{Preliminaries about convex sets and their boundary points}
 
Let $\Omega \subset \mc$ be a closed, convex set with an interior point  
and let $\lambda \in \partial \Omega$. Then there exists at least one supporting line $l_\lambda$ for $\Omega$ passing through $\lambda$. If there exists more than one such line, then $\lambda$ is called a \emph{corner point} of $\partial \Omega$ and $\Omega$ is contained in a closed sector  with vertex $\lambda$ and semivertical angle less than $\pi/2$. While there always exists more than one such sector, we will simply pick the smallest sector with these properties  and choose the supporting line $\l_\lambda$ which is orthogonal to the axis of this sector. Having fixed the supporting line, we now choose a rectangular system of coordinates $(\xi, \eta)$ with an origin at $\lambda$, the $\xi$-axis coinciding with $l_\lambda$ and directed such that $\Omega \subset \{ (\xi ,\eta) : \eta \geq 0\}$. In the following, when using coordinates it will always refer to this coordinate system. 

Let $D_\eps':=\{ (\xi,\eta) : \xi^2 + \eta^2 \leq \eps^2, \xi \neq 0\}$ and  
note that $\partial \Omega \cap D_\eps' \neq \emptyset$ for all $\eps > 0$. We define the \emph{right-hand upper curvature} of $\partial \Omega$ at $\lambda$ as
\begin{equation}
  \label{eq:201}
\gamma_u^+(\lambda) :=  \limsup_{\tiny \begin{matrix}
\eta \to 0, \xi \downarrow 0\\
(\xi,\eta) \in \partial\Omega
\end{matrix}}
 \frac \eta {\xi^2} := \lim_{\eps \downarrow 0} \sup \left\{ \frac{\eta}{\xi^2}: (\xi,\eta) \in \partial \Omega \cap D_\eps', \: \xi > 0 \right\}.
\end{equation}
The \emph{right-hand lower curvature} of $\partial \Omega$ at $\lambda$ is defined as 
\begin{equation}
  \label{eq:11}
  \gamma_l^+(\lambda) :=  \liminf_{\tiny \begin{matrix}
\eta \to 0, \xi \downarrow 0 \\
(\xi,\eta) \in \partial\Omega
\end{matrix}}
 \frac \eta {\xi^2}.
\end{equation}
If $\gamma_l^+(\lambda)=\gamma_u^+(\lambda)$, then the joint value is called the \emph{right-hand curvature}, $\gamma^+(\lambda)$, of $\partial \Omega$ at $\lambda$. The left-hand (upper/lower) curvatures $\gamma_u^-(\lambda), \gamma_l^-(\lambda), \gamma^-(\lambda)$ are defined analogously.              Moreover, the \emph{upper and lower curvatures}, $\gamma_u(\lambda)$ and $\gamma_l(\lambda)$, of $\partial \Omega$ at $\lambda$ are defined as in (\ref{eq:201}) and (\ref{eq:11}), but with the right-limit $\xi \downarrow 0$ replaced by the ordinary limit $\xi \to 0$. Equivalently, $\gamma_l(\lambda)=\min( \gamma_l^+(\lambda), \gamma_l^-(\lambda))$ and $\gamma_u(\lambda)= \max( \gamma_u^+(\lambda), \gamma_u^-(\lambda))$. If  the lower- and upper curvatures of $\partial \Omega$ at $\lambda$ coincide, then the joint value is called the \emph{curvature} $\gamma(\lambda)$ of $\partial \Omega$ at $\lambda$. Note that our definition of curvature differs from the usual one by a factor of $1/2$. However, as we will mainly be interested in points where the curvature is infinite, this should not lead to much confusion.

We call $\lambda$ a \emph{point of infinite curvature}, if $\gamma(\lambda)=\infty$ (which is the case if and only if $\gamma_l(\lambda)=\infty$) and a \emph{point of infinite upper curvature}, if $\gamma_u(\lambda)=\infty$. With our choice of supporting line, if $\lambda$ is a corner point then it is a point of infinite curvature. Finally, we say that $\lambda$ is of \emph{unilateral infinite curvature} if the right- or left-hand curvatures, or both, of $\partial \Omega$ at $\lambda$ are infinite. Note that if $\lambda$ is of unilateral infinite curvature, then it is of infinite upper curvature. We will see below (Example \ref{example1}.(iii)) that the converse need not be true.
\begin{rem}
 In H\"ubner's paper \cite{MR1371343} points of infinite curvature where called \emph{non-round points} and for reasons of brevity we borrowed this term for the title and introduction of this paper. In the following, however, we will not use this term again and speak about curvature instead. 
\end{rem}

\begin{ex}\label{example1} In the following examples we choose $\Omega \subset \mr^2$ as the epigraph of a convex function $f : [-1,1] \to \mr_+$. 
 \begin{enumerate} 
 \item[(i)] $f(\xi)=|\xi|^\alpha$:  Then $\gamma(0)= 0$ if $\alpha > 2$ and $\gamma(0)=1$ if $\alpha=2$. If $1<\alpha<2$ then $0$ is a point of infinite curvature and if $\alpha = 1$ it is a corner point.
 \item[(ii)] $f(\xi)=(-\xi)^{3/2}$ for $\xi < 0$ and $f(\xi)=\xi^2$ for $\xi \geq 0$: Here $\gamma^+(0)=\gamma_l(0)=1$ and $\gamma^-(0)=\gamma_u(0)=\infty$.
 \item[(iii)] $\gamma_l(0)$ can be different from $\gamma_u(0)$ also in case that the function $f$ is even: Let $g:[0,1] \to [0,1]$ be a monotone increasing, polygonal curve which satisfies $x^2 \leq g(x) \leq \sqrt{x}$ for all $x \in [0,1]$ and such that $0$ is a limit point of both
$ \{ x : g(x) = \sqrt{x}\}$ and $\{ x : g(x) = x^2\}$. Let $f(\xi)=\int_0^\xi g(t) dt$ for $\xi \in [0,1]$ and $f(\xi):=f(-\xi)$ for $\xi \in [-1,0)$. Then $f \in C^1[-1,1]$ is convex, $\gamma_u(0) = \gamma^\pm_u(0)=  \infty$ and $\gamma_l(0) = \gamma_l^\pm (0)=0$.  
 \end{enumerate}
In view of the last example we should remark that in \cite{MR593634} it was actually shown that for most convex bodies and for most of their boundary points $\lambda$ (in each case meaning all except those in a set of first Baire category) one has $\gamma_u(\lambda)=\infty$ and $\gamma_l(\lambda)=0$.
\end{ex}

The following equivalence is probably well known among experts. 
\begin{lemma}\label{lem:upper}
Let $\Omega \subset \mc$ be a closed, convex set with an interior point and let $\lambda \in \partial \Omega$.  Then the following are equivalent:
\begin{enumerate}
\item $\lambda$ is a point of infinite upper curvature.
\item There does not exist a closed, non-degenerate disk $D$ such that $\lambda \in D \subset \Omega$. 
\end{enumerate}
\end{lemma}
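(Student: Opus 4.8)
The plan is to prove the contrapositive equivalence, namely that $\gamma_u(\lambda) < \infty$ if and only if there exists a closed, non-degenerate disk $D$ with $\lambda \in D \subset \Omega$. I work throughout in the coordinate system $(\xi,\eta)$ fixed above, so that $\lambda = (0,0)$, the supporting line $l_\lambda$ is the $\xi$-axis, and $\Omega \subset \{(\xi,\eta): \eta \geq 0\}$. The first, purely geometric, reduction is to observe that any admissible disk must be tangent to $l_\lambda$ at $\lambda$: since $\lambda \in \partial\Omega$ and $D \subset \Omega$, the point $\lambda$ cannot be interior to $D$, so $\lambda \in \partial D$; and since $D \subset \{\eta \geq 0\}$ touches the line $\{\eta = 0\}$ only at $\lambda$, its center must lie on the positive $\eta$-axis. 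Hence every admissible disk has the form $D_r := \{(\xi,\eta) : \xi^2 + (\eta-r)^2 \leq r^2\}$ for some $r>0$, and statement (2) fails exactly when $D_r \subset \Omega$ for some $r > 0$.

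For the direction ``disk $\Rightarrow$ finite curvature'' I would argue as follows. If $D_r \subset \Omega$ and $(\xi,\eta) \in \partial\Omega$, then $(\xi,\eta)$ is not interior to $\Omega$, hence not interior to $D_r$, so $\xi^2 + (\eta-r)^2 \geq r^2$, i.e. $\xi^2 \geq \eta(2r-\eta)$. For boundary points with $0 < \eta < 2r$ this gives $\eta/\xi^2 \leq 1/(2r-\eta)$, and since $\eta \to 0$ as the boundary points approach $\lambda$, the $\limsup$ defining $\gamma_u(\lambda)$ is at most $1/(2r) < \infty$.

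For the converse I would assume $\gamma_u(\lambda) = M < \infty$. Then $\lambda$ is not a corner point (corner points have infinite curvature), so $l_\lambda$ is the unique supporting line at $\lambda$; a standard convexity argument — for instance, spanning a triangle by $\lambda$ and two nearby boundary points on either side of the $\eta$-axis, whose heights are controlled by $M\xi^2$ via the curvature bound — then shows that $(0,t) \in \operatorname{int}\Omega$ for all sufficiently small $t > 0$. In particular the center $(0,r)$ is interior for small $r$, so by convexity $D_r \subset \Omega$ is equivalent to $\dist((0,r),\partial\Omega) \geq r$. I claim this holds for some small $r$. If not, there are $r_n \downarrow 0$ and boundary points $p_n = (\xi_n,\eta_n)\in\partial\Omega$ with $|p_n - (0,r_n)| < r_n$, that is $\xi_n^2 + \eta_n^2 < 2r_n\eta_n$. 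This forces $0 < \eta_n < 2r_n$ and $\xi_n^2 < 2r_n\eta_n$, so $p_n \to \lambda$; moreover $\xi_n \neq 0$ (no boundary points lie on the $\eta$-axis near $\lambda$, which is interior), and $\eta_n/\xi_n^2 > 1/(2r_n) \to \infty$, contradicting $\gamma_u(\lambda) = M < \infty$. Hence $D_r \subset \Omega$ for some $r > 0$ and (2) fails.

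The main obstacle is this last direction: the curvature hypothesis controls $\partial\Omega$ only infinitesimally near $\lambda$, whereas one must produce an entire disk lying inside $\Omega$. Two points need care: first, that the candidate's center genuinely lies in $\operatorname{int}\Omega$ — which is exactly where the non-corner property (itself a consequence of finite upper curvature) enters; and second, that no boundary point, near or far, intrudes into the small disk. Both are handled by the contradiction argument above, in which any intruding boundary point is automatically forced to approach $\lambda$ and thereby to produce a blow-up of $\eta/\xi^2$ incompatible with $\gamma_u(\lambda) < \infty$.
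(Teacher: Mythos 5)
Your proof is correct, but it takes a genuinely different route from the paper's. The paper argues through an intermediate object: it asserts that $\lambda$ has finite upper curvature if and only if $\Omega$ contains a piece of parabola touching $\partial\Omega$ at $\lambda$, and then passes between parabolas and disks by osculation (the region bounded by the parabola contains a tangent disk at $\lambda$, and conversely any such disk contains a tangent parabolic arc). You instead work with the disk directly: the reduction to disks $D_r$ tangent to the supporting line at $\lambda$, the elementary inequality $\eta/\xi^2\leq 1/(2r-\eta)$ for the easy direction, and the $\dist((0,r),\partial\Omega)\geq r$ criterion with the blow-up contradiction for the hard direction. What your approach buys is that it actually supplies the quantitative content that the paper's two-line sketch leaves implicit --- in particular, the genuinely nontrivial step (finite upper curvature $\Rightarrow$ some disk fits) is carried out in full, whereas the paper's ``only if'' half of the parabola characterization would require essentially the same work. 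The one spot to firm up is your parenthetical triangle argument for $(0,t)\in\operatorname{int}\Omega$: the two nearby boundary points could both have $\eta=0$ (e.g.\ when $\partial\Omega$ contains a segment of the supporting line through $\lambda$), making the triangle degenerate. The clean version of the step you correctly identify as essential is: finite upper curvature rules out a corner, so $\Omega$ is contained in neither $\{\xi\geq 0\}$ nor $\{\xi\leq 0\}$; since $\Omega$ has interior points, $\operatorname{int}\Omega$ then meets both open half-planes $\{\xi>0\}$ and $\{\xi<0\}$, hence by convexity contains a point $(0,s)$ with $s>0$, and the segment from $\lambda$ to $(0,s)$ is interior except at $\lambda$. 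With that substitution the argument is complete.
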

\begin{proof}
Note that $\lambda$ is a point of finite upper curvature if and only if $\Omega$ contains a piece of parabola touching the boundary at $\lambda$. The domain bounded by this parabola contains a small non-degenerate disk $D \subset \Omega$ such that $\lambda \in \partial D$.

Concerning the other direction we note that if there exists a closed, non-degenerate disk $D$ with $\lambda \in D \subset \Omega$, then this disc contains a piece of parabola with the above properties.
\end{proof}

\section{Main results}   

Let $A$ be a densely defined, closed operator in the complex Hilbert space $(\hil, \langle ., . \rangle)$. As remarked above, by the Toeplitz-Hausdorff theorem its numerical range $\num(A)$, and hence its closure $\cnum(A)$, is a convex set. In the following, without further mentioning, we assume that $\num(A)$ does contain an interior point and that $\num(A) \neq \mc$. 
\begin{rem}
If $\num(A)$ does not have interior points then it is an interval. In this case there exist $\alpha, \beta \in \mc$ such that the operator $\alpha A + \beta \id$  is symmetric, and all known spectral results for symmetric operators easily translate into corresponding results for $A$.
\end{rem}
 
\begin{thm}\label{thm:ev}
Let $\lambda \in \partial \num(A)$ be a point of infinite upper curvature. If $\lambda \in \num(A)$, then $\lambda$ is an eigenvalue of $A$.
\end{thm}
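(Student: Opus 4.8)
The plan is to normalise the situation and then read off a contradiction from the geometry encoded in Lemma \ref{lem:upper}. Replacing $A$ by $e^{i\theta}(A-\lambda\id)$ for a suitable $\theta$ — a substitution that preserves closedness, dense definedness and the eigenvalues, and whose effect on the plane is the rigid motion $z\mapsto e^{i\theta}(z-\lambda)$, so that infinite upper curvature at the image point is preserved — I may assume $\lambda=0$, that the real axis is the supporting line $l_0$ of the coordinate system of Section 2, and hence that $\Im\langle Af,f\rangle\ge 0$ for all $f\in\dom(A)$. Since $0\in\num(A)$, I fix a unit vector $f_0\in\dom(A)$ with $\langle Af_0,f_0\rangle=0$; the goal then becomes $Af_0=0$.

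The first step is valid at any boundary point of $\num(A)$: the form $s(f,g):=\frac{1}{2i}(\langle Af,g\rangle-\langle f,Ag\rangle)$ is a nonnegative Hermitian sesquilinear form on $\dom(A)$, since $s(f,f)=\Im\langle Af,f\rangle\ge 0$. Because $s(f_0,f_0)=0$, the Cauchy--Schwarz inequality for nonnegative forms forces $s(f_0,g)=0$ for every $g\in\dom(A)$, i.e. $\langle Af_0,g\rangle=\langle f_0,Ag\rangle$. This says precisely that $f_0\in\dom(A^*)$ with $A^*f_0=Af_0=:h$; in particular $\langle h,f_0\rangle=\langle Af_0,f_0\rangle=0$.

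The curvature hypothesis enters in the second step, run by contradiction. Suppose $h\neq 0$. As $\dom(A)$ is dense I can pick $g\in\dom(A)$ with $\langle h,g\rangle\neq 0$, and after multiplying $g$ by a unimodular constant I may assume $w:=\langle h,g\rangle>0$. Expanding along $f_0+tg$ with $t\in\mr$ and using $\langle Ag,f_0\rangle=\langle g,A^*f_0\rangle=\langle g,h\rangle=\overline{w}$ yields
\[
  \langle A(f_0+tg),f_0+tg\rangle = 2tw + t^2\langle Ag,g\rangle,
\]
while $\|f_0+tg\|^2=1+O(t)$. Hence the points $\langle A(f_0+tg),f_0+tg\rangle/\|f_0+tg\|^2\in\num(A)$ have coordinates $\xi(t)=2tw+O(t^2)$ and $\eta(t)=t^2\,\Im\langle Ag,g\rangle+O(t^3)$, with $\xi'(0)=2w\neq 0$; thus $\eta(t)/\xi(t)^2\to \Im\langle Ag,g\rangle/(4w^2)<\infty$. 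Since $t\mapsto\xi(t)$ is a local homeomorphism at $0$, the $\xi(t)$ sweep out a full one-sided neighbourhood of $0$, and convexity together with $\num(A)\subset\{\eta\ge 0\}$ (so that $\partial\num(A)$ is locally the graph $\eta=\psi(\xi)$ with $\psi(\xi(t))\le\eta(t)$) bounds $\psi(\xi)/\xi^2$; doing the same with $t<0$ shows both one-sided upper curvatures are finite. Equivalently, the parabolic region so produced contains a disk $D$ with $0\in D\subset\cnum(A)$, so Lemma \ref{lem:upper} makes $0$ a point of finite upper curvature, contradicting the hypothesis. Therefore $h=Af_0=0$, $f_0$ is an eigenvector for the eigenvalue $0$, and $\lambda$ is an eigenvalue of $A$.

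The main obstacle is unboundedness. In the bounded case one would simply restrict $A$ to the span of $f_0$ and $Af_0$ and read off a parabola, but here $Af_0=h$ need not lie in $\dom(A)$, so $A$ is undefined on that subspace. The decisive move is to perturb only inside $\dom(A)$, along $f_0+tg$, and to use the relation $A^*f_0=Af_0$ from the first step to evaluate the cross term $\langle Ag,f_0\rangle$ without ever applying $A$ to $h$. A secondary technical point I would need to verify carefully is that $t\mapsto\xi(t)$ is a genuine local homeomorphism near $0$, so that the estimate on $\eta(t)/\xi(t)^2$ controls the boundary function $\psi$ on an entire one-sided neighbourhood rather than only along the trajectory.
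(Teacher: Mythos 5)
Your argument is correct in substance but follows a genuinely different route from the paper's. The paper reproduces Donoghue's finite-dimensional argument: it compresses $A$ to $\hil_0=\operatorname{span}\{f_0,Af_0\}$, invokes the elliptical range theorem for $2\times 2$ matrices, and uses Lemma \ref{lem:upper} to force the ellipse $\num(PAP|_{\hil_0})$ to degenerate to a point or a segment, from which $Af_0=\lambda f_0$ follows by a little linear algebra. You instead (i) extract from the supporting-line normalisation the identity $A^*f_0=Af_0$ via Cauchy--Schwarz for the nonnegative form $\Im\langle A\cdot,\cdot\rangle$, and (ii) perturb along $f_0+tg$ with a test vector $g\in\dom(A)$, computing $\langle A(f_0+tg),f_0+tg\rangle=2tw+t^2\langle Ag,g\rangle$ to exhibit a curve in $\num(A)$ tangent to the supporting line to second order, contradicting infinite upper curvature. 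Both steps are correct, and your approach has a real advantage: you never apply $A$ outside $\dom(A)$, whereas the paper's compression onto $\operatorname{span}\{f_0,Af_0\}$ is delicate for unbounded $A$ since $Af_0$ need not lie in $\dom(A)$. The price is that you only get a one-real-parameter curve rather than a full two-dimensional numerical range, which makes the concluding geometric step slightly more work.

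That concluding step is the one place where your write-up is too loose, though the gap is fillable. The convex hull of the curve $\{(\xi(t),\eta(t))\}$ alone need not contain a non-degenerate disk through $0$: if $\Im\langle Ag,g\rangle=0$ the curve can degenerate to a segment of the supporting line, and more generally the heights $\eta(t)$ are only known to satisfy $0\le\eta(t)\le C\xi(t)^2$, so ``the parabolic region so produced'' is not automatically contained in $\cnum(A)$. Likewise, the claim that $\partial\num(A)$ is locally the graph of $\psi$ near $0$ (i.e.\ that no ``upper'' boundary points of the thin convex set approach $0$) is true but not free. Both issues are resolved by bringing in an interior point $p^*$ of $\num(A)$ (guaranteed by the paper's standing assumption): either show that $\conv\bigl(\{(\xi(t),\eta(t))\}\cup\{p^*\}\bigr)$ contains a disk $\overline{B}((0,\rho),\rho)$ for small $\rho$ and invoke Lemma \ref{lem:upper}, or argue directly that any supporting line at a boundary point $(\xi_0,\eta_0)$ near $0$ with $\xi_0>0$ must have $\cnum(A)$ above it (the interior point rules out the other case for small $\varepsilon$), and then test that line against $(0,0)$ and the curve point with abscissa $2\xi_0$ to get $\eta_0\le 2C\xi_0^2$. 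With either repair the contradiction you want follows, so the proof goes through.
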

A version of this theorem has first been established by Donoghue \cite{MR0096127}, who considered the case of corner points of bounded operators (in \cite{MR1839845} this was extended to corner points of the quadratic numerical range).  While we haven't found the above generalization to unbounded operators and to points of infinite upper curvature in the literature, the result might be known to the experts in the field as its proof doesn't require much changes as compared to Donoghue's original result. Our proof follows along the lines of the proof of Donoghue's theorem given in \cite{Shapiro}.      
\begin{proof}
Let $f \in \dom(A)$ with $\|f\|=1$ such that $\langle Af,f \rangle=\lambda$. If we can show that $f$ is an eigenfunction of $f$, then necessarily $Af=\lambda f$ and we are done. 

Let us suppose that $f$ is not an eigenfunction of $A$, i.e. $f$ and $Af$ are linearly independent, and derive a contradiction. To this end, let $P$ denote the orthogonal projection onto the two-dimensional Hilbert space $\hil_0=\operatorname{span}\{f,Af\}$ and let $A_0=PAP$, acting on $\hil_0$. Then $\num(A_0)$ is an ellipse (possibly degenerated to a line segment or even to a point) whose foci are the eigenvalues of $A_0$, see \cite{MR1417493} Section 1.1.   Since $\lambda$ is a point of infinite upper curvature of $\partial \num(A)$, Lemma \ref{lem:upper} implies that there does not exist a non-degenerate ellipse $E$ such that  $\lambda \in E \subset \cnum(A)$. But $\lambda \in \num(A_0) \subset \num(A)$, so $\num(A_0)$ must be a proper line segment or a single point. If it is a single point, then $\num(A_0)=\{\lambda\}$ and $A_0=\lambda \id$, which implies that  $Af = A_0 f = \lambda f$ and leads to a contradiction. On the other hand, if $\num(A_0)$ is a proper line segment,  then, since $\lambda$ is an extreme point of $\overline{\num}(A)$ (as a point of infinite upper curvature), it must be one of the endpoints of this line segment and hence is an eigenvalue of $A_0$. If $\mu \neq \lambda$ denotes the other eigenvalue of $A_0$, then the corresponding normalized eigenfunctions $f_\lambda$ and $f_\mu$ are orthogonal, as follows from the fact that there exist $\alpha, \beta \in \mc, \alpha \neq 0$ such that $\alpha A_0 + \beta \id$ is symmetric. Since  $\hil_0=\operatorname{span}\{f_\lambda, f_\mu\}$ we can write $ f = \gamma f_\lambda + \delta f_\mu$, where $|\gamma|^2 + |\delta|^2 = \|f\|^2=1$, and $Af=A_0f = \gamma \lambda f_\lambda + \delta \mu f_\mu$. But this shows that $\lambda = \langle Af,f \rangle = |\gamma|^2 \lambda + |\delta|^2 \mu$, which implies that $\delta = 0$ and so $Af=\lambda f$, again leading to a contradiction. 
\end{proof}
Theorem \ref{thm:ev} immediately leads to the question what one can expect if the assumption that  $\lambda$ is an element of the numerical range is removed. 
\begin{thm} \label{thm:main}
Suppose that $\dom(A) \subset \dom(A^*)$. If  $\lambda \in \partial \num(A)$ is a point of unilateral infinite curvature, then $\lambda \in \sigma(A)$. 
\end{thm}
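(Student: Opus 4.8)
The plan is to reduce to the situation of Theorem \ref{thm:ev} whenever possible and otherwise to manufacture an approximate eigenvector from the geometry of $\partial\num(A)$ near $\lambda$. After a translation I may assume $\lambda=0$ and fix the coordinate system $(\xi,\eta)$ of Section 2, so that $\cnum(A)$ lies in the upper half plane $\{\eta\ge 0\}$ and the real axis is the supporting line at $0$; relabelling $\xi\mapsto-\xi$ if necessary, I assume the right-hand curvature is infinite, i.e.\ $\gamma_l^+(0)=\infty$. If $0\in\num(A)$, then $0$ is a point of infinite upper curvature lying in $\num(A)$ and Theorem \ref{thm:ev} already gives that $0$ is an eigenvalue, hence $0\in\sigma(A)$. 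So the real content is the case $0\in\partial\num(A)\setminus\num(A)$, and I shall argue by contradiction, assuming $0\in\rho(A)$ and setting $M=\|A^{-1}\|<\infty$, so that $\|Af\|\ge M^{-1}\|f\|$ for all $f\in\dom(A)$.

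Next I would use the hypothesis $\dom(A)\subseteq\dom(A^*)$ to split $A$ into symmetric parts. On $\dom(A)$ set $T=\tfrac12(A+A^*)$ and $S=\tfrac1{2i}(A-A^*)$; then $T,S$ are symmetric, $A=T+iS$ and $A^*=T-iS$ on $\dom(A)$, and $\langle Sf,f\rangle=\Im\langle Af,f\rangle\ge 0$ because $\num(A)$ lies in the upper half plane. The assumption $\gamma_l^+(0)=\infty$ means that for every $R>0$ the boundary obeys $\eta\ge R\xi^2$ for all small $\xi>0$; feeding this into the support functional of slope $m>0$ yields, for every $R$ and all sufficiently small $m$, the form inequality $\langle Sf,f\rangle\ge m\langle Tf,f\rangle+c(m)\|f\|^2$ for all $f\in\dom(A)$, where the intercept satisfies $-\tfrac{m^2}{4R}\le c(m)\le 0$; in particular $c(m)=o(m^2)$. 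Choosing near-minimisers $g_m$ of the rotated real part $S-mT$, the infinite curvature localises them near the touching point and forces $\langle Tg_m,g_m\rangle\to0$ and $\langle Sg_m,g_m\rangle\to0$, hence $\langle Ag_m,g_m\rangle\to0$.

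The crux is to upgrade this convergence of expectations to $\|Ag_m\|\to0$, which would contradict $\|Ag_m\|\ge M^{-1}$. Writing $\|Ag_m\|^2=\|Tg_m\|^2+\|Sg_m\|^2+2\Im\langle Tg_m,Sg_m\rangle$, what is needed is that the symmetric parts are small in norm, not merely in expectation. This is exactly the point at which the classical proofs use boundedness: for bounded $A$ one has $\langle Sg,g\rangle=\|S^{1/2}g\|^2\to0$ with $S^{1/2}$ bounded, so $Sg\to0$ at once, and similarly for $T$. For unbounded $S$ this implication fails, and overcoming it is the main obstacle. My plan is to replace the single horizontal inequality $S\ge0$ by the whole one-parameter family above, combining the sharpened estimate $c(m)=o(m^2)$ with the Cauchy--Schwarz inequality for the nonnegative form $S$ and the companion lower bound $S-mT\ge c(m)$ to control $\|Sg_m\|$ and $\|Tg_m\|$ simultaneously; the fact that $c(m)$ is of smaller order than $m^2$ is what closes the resulting estimates. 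The identities $A=T+iS$ and $A^*=T-iS$ on $\dom(A)$, valid precisely because $\dom(A)\subseteq\dom(A^*)$, are what make these manipulations legitimate.

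Finally, from $\|Ag_m\|\to0$ with $\|g_m\|=1$ one contradicts $0\in\rho(A)$, so $0\in\sigma(A)$; the left-hand curvature case follows by the reflection $\xi\mapsto-\xi$. I should also keep in mind the degenerate scenario in which no such localised minimising sequence can be extracted because $\ran(A)$ fails to be dense: in that case $\ker(A^*)=\ran(A)^\perp\neq\{0\}$, which forces $0\in\sigma(A)$ directly. The single step I expect to be genuinely delicate is the norm control of the unbounded symmetric parts $S$ and $T$ along $(g_m)$, since this is where the passage from bounded to unbounded operators is not routine.
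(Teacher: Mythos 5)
Your reduction to the case $0\in\partial\num(A)\setminus\num(A)$ via Theorem \ref{thm:ev}, and the translation/normalization, match the paper. But the core of your argument has a genuine gap at exactly the step you flag as ``delicate'': you propose to derive the contradiction from $\|Ag_m\|\to 0$ versus $\|Ag_m\|\ge M^{-1}$, i.e.\ to upgrade the convergence of the quadratic forms $\langle Tg_m,g_m\rangle,\langle Sg_m,g_m\rangle\to 0$ to norm convergence of $Ag_m$. This is precisely what the paper states it \emph{cannot} do in the unbounded setting (``In the unbounded case we will not be able to show that $Au_n\to 0$ and have to go along a different route''), and it is recorded there as an open question. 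The tools you name do not close it: the Cauchy--Schwarz inequality for the nonnegative form $S$ gives $|\langle Sg_m,h\rangle|^2\le\langle Sg_m,g_m\rangle\,\langle Sh,h\rangle$, which controls $Sg_m$ only \emph{weakly}, tested against vectors $h$ with $\langle Sh,h\rangle$ uniformly bounded; to extract $\|Sg_m\|\to 0$ you would have to take $h=Sg_m$ and bound $\langle S(Sg_m),Sg_m\rangle$, which is exactly where unboundedness of $S$ destroys the classical argument. The refinement $c(m)=o(m^2)$ does not repair this, because it again only sharpens form (expectation) estimates, not operator norms. As written, your proof therefore does not reach a contradiction.

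The fix is to lower the target from norm convergence to the weak-type statement that is actually needed, which is the paper's Proposition \ref{prop:1}: for any sequence $(f_n)\subset\dom(A)$ with $\sup_n\|f_n\|$, $\sup_n\|Af_n\|$, $\sup_n\|A^*f_n\|$ all finite, one has $|\langle f_n,Au_n\rangle|+|\langle f_n,A^*u_n\rangle|\to 0$. This is proved by perturbing $u_n$ to $w_n=u_n\pm\sqrt{\eps_n}c_nv_n$ with $v_n$ a rotated multiple of $f_n$ and feeding $w_n/\|w_n\|$ into the curvature functional --- morally the same one-parameter family of supporting-line inequalities you describe, but applied to cross terms $\langle Av_n,u_n\rangle+\langle Au_n,v_n\rangle$ rather than to $\|Ag_m\|$. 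The contradiction then comes not from a norm lower bound but from testing against $f_n=A^{-1}u_n$: the closed graph theorem shows $A^*A^{-1}\in\bdd(\hil)$ (this is where $\dom(A)\subset\dom(A^*)$ is used quantitatively), so $(f_n)$ is admissible, while $\langle f_n,A^*u_n\rangle=\langle Af_n,u_n\rangle=\|u_n\|^2=1$ for every $n$. Your closing remark about $\ran(A)$ failing to be dense is moot under the contradiction hypothesis $0\in\rho(A)$, since then $\ran(A)=\hil$.
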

H\"ubner \cite{MR1371343} proved this theorem for bounded operators and points of infinite curvature. Still considering the case of bounded operators, Salinas and Velasco \cite{MR1800238} generalized it to points of unilateral infinite curvature. 
\begin{question}
Does the conclusion of Theorem \ref{thm:main} remain valid if $\lambda$ is only a boundary point of infinite \emph{upper} curvature? (The answer to this question seems to be unknown even in the bounded case.) 
\end{question}
\begin{question}
As we will see below, the assumption that $\dom(A) \subset \dom(A^*)$ will enter our proof of Theorem \ref{thm:main} rather naturally. However, is it really necessary?
\end{question}  
The proof of Theorem \ref{thm:main}, which requires considerable changes as compared to the proof of the bounded case,  will be given in the next section. First, let us discuss some of the consequences of this theorem: Recall that the approximate point spectrum of $A$ is defined as 
\begin{equation}
  \label{eq:1}
  \sigma_{ap}(A)=\{ \lambda \in \mc : \exists (u_n) \subset \dom(A), \|u_n\|=1, (A-\lambda \id)u_n \to 0 \}.
\end{equation}
It is well known that the topological boundary of $\sigma(A)$ is contained in $\sigma_{ap}(A)$. 
\begin{cor}\label{cor:1}
Suppose that $\dom(A) \subset \dom(A^*)$ and that $\sigma(A) \subset \cnum(A)$. If $\lambda \in \partial \num(A)$ is a point of unilateral infinite curvature, then  $\lambda \in \sigma_{ap}(A)$. 
\end{cor}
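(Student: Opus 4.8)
The plan is to deduce this almost immediately from Theorem~\ref{thm:main} together with the fact, recalled just above, that $\partial\sigma(A) \subset \sigma_{ap}(A)$. Since $\dom(A) \subset \dom(A^*)$ and $\lambda$ is a point of unilateral infinite curvature on $\partial\num(A)$, Theorem~\ref{thm:main} applies directly and yields $\lambda \in \sigma(A)$. The only thing that remains is to show that $\lambda$ does not lie in the \emph{interior} of $\sigma(A)$, for then $\lambda \in \partial\sigma(A)$ and the recalled inclusion finishes the argument.

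To rule out that $\lambda$ is an interior point of $\sigma(A)$, I would invoke the hypothesis $\sigma(A) \subset \cnum(A)$ together with the fact that $\lambda$ is a boundary point of the numerical range. Recall that for a convex set with an interior point the boundary of the set coincides with the boundary of its closure, so $\lambda \in \partial\num(A) = \partial\cnum(A)$; in particular $\lambda$ is not an interior point of $\cnum(A)$. Now suppose, for contradiction, that $\lambda$ were an interior point of $\sigma(A)$. Then some open neighborhood $U$ of $\lambda$ would satisfy $U \subset \sigma(A) \subset \cnum(A)$, which would make $\lambda$ an interior point of $\cnum(A)$, contradicting the previous sentence. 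Hence $\lambda \in \sigma(A) \setminus \operatorname{int}\sigma(A) = \partial\sigma(A)$, and therefore $\lambda \in \sigma_{ap}(A)$.

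There is essentially no genuine obstacle here: all of the analytic content sits in Theorem~\ref{thm:main}, and what remains is the purely topological observation that a boundary point of $\cnum(A)$ which lies in the subset $\sigma(A) \subset \cnum(A)$ must in fact be a boundary point of $\sigma(A)$. The one small point to keep in mind is the passage between $\partial\num(A)$ and $\partial\cnum(A)$, which is legitimate precisely because $\num(A)$ is assumed to be convex with nonempty interior.
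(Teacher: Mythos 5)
Your proposal is correct and is essentially the paper's own argument: apply Theorem~\ref{thm:main} to get $\lambda\in\sigma(A)$, then use $\sigma(A)\subset\cnum(A)$ and the fact that $\lambda$ is a boundary point of the (closed) numerical range to conclude $\lambda\in\partial\sigma(A)\subset\sigma_{ap}(A)$. You merely spell out the topological step (and the identification $\partial\num(A)=\partial\cnum(A)$ for a convex set with interior points) that the paper leaves implicit.
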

The assumption $\sigma(A) \subset \overline{\num}(A)$ is satisfied whenever $\mc \setminus \overline{\num}(A)$ is connected and 
contains a point which is not in the spectrum of $A$, see \cite{kato}. In particular, in the bounded case it is always satisfied.
\begin{proof}[Proof of Corollary \ref{cor:1}]
  From Theorem \ref{thm:main} we know that $\lambda$ is in $\sigma(A) \cap \partial \num(A)$. Since $\sigma(A) \subset \cnum(A)$ by assumption, it follows that $\lambda \in \partial \sigma(A) \subset \sigma_{ap}(A)$. 
\end{proof}
We  recall that the essential spectrum of $A$ is defined as
\begin{equation}
  \label{eq:10}
\sigma_{ess}(A) = \{ \lambda \in \mc : A-\lambda \id \text{ is not a Fredholm operator } \}  
\end{equation}
and that a linear operator in $\hil$ is called a Fredholm operator if it has closed range and its kernel and cokernel are finite dimensional. 
\begin{thm} \label{thm:ess}
Suppose that $\dom(A) \subset \dom(A^*)$ and that $\sigma(A) \subset \cnum(A)$.  If $\lambda \in \partial \num(A)$ is a point of unilateral infinite curvature, but not a corner point, then $\ran(A-\lambda \id)$ is not closed. In particular, $\lambda \in \sigma_{ess}(A)$.
\end{thm}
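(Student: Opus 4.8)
The plan is to argue by contradiction: assuming $\ran(A-\lambda\id)$ is closed, I will show that $\lambda$ must either be a corner point or fail to be a point of unilateral infinite curvature, contradicting the hypotheses. Since multiplication by an invertible scalar and translation preserve all the relevant structure (closedness of the range, the spectrum, the essential spectrum, the inclusion $\dom(A)\subseteq\dom(A^*)$, and the condition $\sigma(A)\subseteq\cnum(A)$), I may replace $A$ by $e^{-i\theta}(A-\lambda\id)$ and thereby assume $\lambda=0$, that the supporting line at $0$ — which is \emph{unique} precisely because $0$ is not a corner point — is the real axis, and that $\num(A)\subset\{z:\Im z\ge 0\}$. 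Because the hypotheses of Corollary \ref{cor:1} hold, $0\in\sigma_{ap}(A)$; together with the assumption that $\ran(A)$ is closed this forces $\ker(A)\neq\{0\}$, since an operator with closed range that is not bounded below cannot be injective.

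The first substantial step is to show that $N:=\ker(A)$ reduces $A$, via a variational argument at the boundary point. For a unit vector $p\in N$ and any $q\in\dom(A)$ with $q\perp p$, the unit vector $f_t=\sqrt{1-t^2}\,p+tq$ satisfies $\langle Af_t,f_t\rangle = t\sqrt{1-t^2}\,\langle Aq,p\rangle + t^2\langle Aq,q\rangle$ because $Ap=0$. As $\num(A)$ lies in the upper half-plane, $\Im\langle Af_t,f_t\rangle\ge 0$ for all small $t>0$; dividing by $t$ and letting $t\downarrow 0$, and then repeating with $q$ replaced by $-q,\,iq,\,-iq$, yields $\langle Aq,p\rangle=0$. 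This persists for every $q\in\dom(A)$, so by density $A^*p=0$; hence $N\subseteq\ker(A^*)$ and $N$ reduces $A$. Writing $A=0_N\oplus A'$ with $A'=A|_{N^\perp}$, one has $\ker(A')=\{0\}$ and $\num(A)=\conv(\{0\}\cup\num(A'))$. Moreover, closedness of $\ran(A)$ means the reduced minimum modulus of $A$ is positive, which gives $\|A'q\|\ge\mu\|q\|$ for $q\in N^\perp\cap\dom(A)$; thus $A'$ is bounded below, so $0\notin\sigma_{ap}(A')$.

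The final step is a convex-geometric trichotomy on the position of $0$ relative to $\cnum(A')$. It cannot be interior, for then $0$ would be interior to $\num(A)$. If $0\notin\cnum(A')$, the two tangent lines drawn from $0$ to $\cnum(A')$ are distinct supporting lines of $\conv(\{0\}\cup\cnum(A'))=\num(A)$ at $0$, so $0$ is a corner point — contradicting the hypothesis; this is exactly where ``not a corner point'' is used. If instead $0\in\partial\cnum(A')$, then $\conv(\{0\}\cup\num(A'))$ coincides with $\cnum(A')$ near $0$, so $0$ is a non-corner boundary point of $\num(A')$ of unilateral infinite curvature. Since $A'$ inherits $\dom(A')\subseteq\dom(A'^*)$ and $\sigma(A')\subseteq\sigma(A)\subseteq\cnum(A)=\cnum(A')$, Corollary \ref{cor:1} applies to $A'$ and gives $0\in\sigma_{ap}(A')$ — contradicting that $A'$ is bounded below. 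In every case we reach a contradiction, so $\ran(A-\lambda\id)$ is not closed; hence $A-\lambda\id$ is not Fredholm and $\lambda\in\sigma_{ess}(A)$.

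The main obstacle is the unbounded-operator bookkeeping: one must verify carefully that $N$ genuinely reduces the closed operator $A$ (so that $A'$ is again closed and densely defined with $(A')^*=A^*|_{N^\perp}$), that $A'$ inherits every hypothesis needed to re-invoke Corollary \ref{cor:1}, and the standard but essential equivalence ``$\ran(A)$ closed $\iff$ reduced minimum modulus positive.'' The conceptual heart is the self-improving use of Corollary \ref{cor:1}: after stripping off the reducing kernel, the \emph{same} boundary point of infinite curvature reappears for an operator that is now bounded below, which is exactly the contradiction that forces the range to be non-closed.
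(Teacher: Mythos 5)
Your proof is correct and follows essentially the same route as the paper: split off $N=\ker(A)$ (which reduces $A$ because $\ker(A)\subseteq\ker(A^*)$ at a boundary point --- you re-derive this variationally where the paper simply cites Lemma \ref{lem:ker}), observe that the same non-corner point of unilateral infinite curvature survives for the injective, bounded-below restriction to $N^\perp$, and re-invoke Corollary \ref{cor:1} to get a contradiction, exactly as in the paper's argument with $B=A|_{\ker(A)^\perp}$. The only loose spot is your case $0\notin\cnum(A')$: if the conical hull of $\cnum(A')$ at $0$ is a full half-plane there are no two distinct tangent lines, and one must instead observe that $0$ would then fail to be an extreme point of $\cnum(A)$, contradicting unilateral infinite curvature --- this is precisely what the paper's phrase ``extreme point, but not a corner point'' is invoking.
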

In the bounded case this theorem has been proved independently by Farid \cite{MR1731863}, Spitkovsky 
\cite{MR1752163} and Salinas and Velasco \cite{MR1800238}. Our proof will follow along the lines of Spitkovsky's proof, but, once again, it will require some adaptions to work for the unbounded case. We will need the following two lemmas. 
\begin{lemma}\label{lem:ker}
Let $\dom(A) \subset \dom(A^*)$ and suppose that $\lambda \in \partial \num(A)$. Then
\begin{equation}
  \label{eq:22}
\ker(A-\lambda \id) \subset \ker(A^*-\overline{\lambda}\id).  
\end{equation}
\end{lemma}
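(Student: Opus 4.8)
The plan is to reduce to the case of an \emph{accretive} operator and then exploit its nonnegative symmetric real part.

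First I would normalise the geometry. Since $\lambda \in \partial \num(A)$, there is a supporting line for $\num(A)$ through $\lambda$, and we may pick an angle $\theta \in [0,2\pi)$ so that the operator $B := e^{i\theta}(A - \lambda \id)$ satisfies $0 \in \partial \num(B)$ with $\num(B) \subset \{ z \in \mc : \Re z \geq 0 \}$; equivalently, $\Re \langle Bg, g \rangle \geq 0$ for every $g \in \dom(B) = \dom(A)$. A short computation (using $(cA)^* = \overline{c} A^*$ for $c \neq 0$ and $(A+\beta\id)^* = A^* + \overline{\beta}\id$) shows $B^* = e^{-i\theta}(A^* - \overline{\lambda}\id)$, so that $\dom(B^*) = \dom(A^*)$, the hypothesis $\dom(A) \subset \dom(A^*)$ becomes $\dom(B) \subset \dom(B^*)$, and the claimed inclusion $\ker(A - \lambda \id) \subset \ker(A^* - \overline{\lambda}\id)$ is equivalent to $\ker(B) \subset \ker(B^*)$. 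Hence it suffices to prove: if $B$ is accretive with $\dom(B) \subset \dom(B^*)$, then $\ker(B) \subset \ker(B^*)$.

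Next I would introduce the symmetric real part. Because $\dom(B) \subset \dom(B^*)$, the operator $S := \tfrac{1}{2}(B + B^*)$ is well defined on $\dom(S) := \dom(B)$; this is precisely where the domain hypothesis enters. Using the adjoint relations $\langle Bu, v \rangle = \langle u, B^*v \rangle$ and $\langle B^*u, v \rangle = \langle u, Bv \rangle$, valid for $u,v \in \dom(B) \subset \dom(B^*)$, one checks that $S$ is symmetric and that $\langle Su, u \rangle = \Re \langle Bu, u \rangle \geq 0$ for all $u \in \dom(B)$. Thus $(u,v) \mapsto \langle Su, v \rangle$ is a nonnegative Hermitian sesquilinear form on $\dom(B)$.

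Finally I would run the extremality argument. Let $f \in \ker(B)$, so $f \in \dom(B)$ with $Bf = 0$; then $\langle Sf, f \rangle = \Re \langle Bf, f \rangle = 0$. The Cauchy--Schwarz inequality for the nonnegative form above gives $|\langle Sf, v \rangle|^2 \leq \langle Sf, f \rangle \, \langle Sv, v \rangle = 0$ for every $v \in \dom(B)$, whence $\langle Sf, v \rangle = 0$ on the dense set $\dom(B)$ and therefore $Sf = 0$. Since $Bf = 0$ this yields $B^*f = 2Sf - Bf = 0$, i.e. $f \in \ker(B^*)$. Undoing the reduction proves the lemma. I expect the main obstacle to be the careful domain bookkeeping for the possibly unbounded operator $S$: verifying that $S$ is genuinely symmetric on all of $\dom(B)$ (which is exactly where $\dom(A) \subset \dom(A^*)$ is indispensable) and that the Cauchy--Schwarz inequality is legitimately applied to the merely symmetric, possibly unbounded nonnegative form $\langle Su, v \rangle$ rather than to a bounded one. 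Tracking how the transformation $A \mapsto e^{i\theta}(A - \lambda \id)$ acts on the adjoint and on the two kernels also requires a little care, but is routine.
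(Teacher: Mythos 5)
Your argument is correct, and it is worth noting that the paper itself does not prove this lemma at all: it simply cites Theorem~1 of \cite{MR3162253}. So you have supplied a genuine self-contained proof where the paper outsources one. Every step checks out: the rotation $B=e^{i\theta}(A-\lambda\id)$ with $B^*=e^{-i\theta}(A^*-\overline{\lambda}\id)$ correctly transports domains and kernels; $S=\tfrac12(B+B^*)$ is well defined on $\dom(B)$ precisely because $\dom(B)\subset\dom(B^*)$, is symmetric there, and satisfies $\langle Su,u\rangle=\Re\langle Bu,u\rangle\geq 0$; and the Cauchy--Schwarz inequality for a nonnegative Hermitian sesquilinear form needs no boundedness, so $\langle Sf,f\rangle=0$ forces $Sf\perp\dom(B)$ and hence $Sf=0$, giving $B^*f=2Sf-Bf=0$. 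One observation you may find interesting: the inclusion actually holds \emph{without} the hypothesis $\dom(A)\subset\dom(A^*)$. If you skip the operator $S$ and instead expand $0\leq\Re\langle B(f+sg),f+sg\rangle=\Re\bigl(s\langle Bg,f\rangle\bigr)+|s|^2\Re\langle Bg,g\rangle$ for $f\in\ker(B)$, $g\in\dom(B)$ and $s\in\mc$ of small modulus and suitably chosen phase, you conclude $\langle Bg,f\rangle=0$ for all $g\in\dom(B)$, which by the very definition of the adjoint yields $f\in\dom(B^*)$ and $B^*f=0$ with no a priori domain assumption. (This is essentially the discriminant argument hiding inside your Cauchy--Schwarz step, applied before rather than after forming $S$.) Your version is perfectly valid under the stated hypotheses, but it makes the domain assumption look essential when it is only a convenience for defining $S$.
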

\begin{proof}
  See  \cite{MR3162253}, Theorem 1.
\end{proof}
Note that in case that $A \in \bdd(\hil)$, i.e. $A$ is a bounded operator with $\dom(A)=\hil$, one has equality in (\ref{eq:22}), see the paper of Spitkovsky \cite{MR1752163}.  
\begin{lemma}\label{lem:2}
  Let $\lambda \in \sigma_{ap}(A)$ and let $A-\lambda \id$ be injective. Then $\ran(A-\lambda \id)$ is not closed.
\end{lemma}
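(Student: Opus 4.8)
The plan is to argue by contradiction, assuming that $\ran(A-\lambda\id)$ is closed and deriving that $A-\lambda\id$ is bounded below, in direct conflict with the hypothesis $\lambda \in \sigma_{ap}(A)$.

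First I would set $B := A - \lambda \id$ and record the elementary facts that $B$ is closed (a scalar shift preserves closedness), injective by hypothesis, and $\dom(B)=\dom(A)$. Under the contradiction hypothesis $\ran(B)$ is a closed subspace of $\hil$, hence itself a Hilbert space in the inherited norm. Since $B$ is injective, the inverse $B^{-1}\colon \ran(B) \to \hil$ is a well-defined linear map whose domain is all of $\ran(B)$. Its graph is the image of the graph of $B$ under the coordinate flip $(x,y)\mapsto(y,x)$, which is a homeomorphism of $\hil\times\hil$; as the graph of $B$ is closed, so is that of $B^{-1}$. Thus $B^{-1}$ is a closed operator defined on the entire Banach space $\ran(B)$.

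Now I would invoke the closed graph theorem to conclude that $B^{-1}$ is bounded. Writing $c := \|B^{-1}\|^{-1} > 0$, this boundedness is equivalent to the lower bound $\|Bx\| \geq c\|x\|$ for all $x \in \dom(B)$. Finally, the assumption $\lambda \in \sigma_{ap}(A)$ furnishes a sequence $(u_n)\subset\dom(A)=\dom(B)$ with $\|u_n\|=1$ and $\|(A-\lambda\id)u_n\| = \|Bu_n\| \to 0$. Combining this with the lower bound gives $0 = \lim_n \|Bu_n\| \geq c\lim_n\|u_n\| = c > 0$, the desired contradiction. Hence $\ran(A-\lambda\id)$ is not closed.

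I do not expect a serious obstacle here: the whole argument rests on the standard principle that an injective closed operator with closed range is automatically bounded below. The only point requiring a little care is verifying that $B^{-1}$ is genuinely a closed, everywhere-defined operator on the Banach space $\ran(B)$, so that the closed graph theorem applies — once this structural observation is in place, the contradiction with the approximate point spectrum is immediate.
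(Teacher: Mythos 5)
Your argument is correct and follows essentially the same route as the paper: assume closed range, observe that $(A-\lambda\id)^{-1}$ is a closed operator on the Banach space $\ran(A-\lambda\id)$, apply the closed graph theorem to get boundedness, and contradict the approximate eigenvector sequence. The only cosmetic difference is that you phrase the contradiction via the lower bound $\|Bx\|\geq c\|x\|$, whereas the paper normalizes $v_n=(A-\lambda\id)u_n$ and notes $\|(A-\lambda\id)^{-1}(v_n/\|v_n\|)\|\to\infty$.
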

\begin{proof}
Let us assume that $\ran(A-\lambda \id)$ is closed. Then by the closed graph theorem the closed operator $(A-\lambda \id)^{-1} : \ran(A-\lambda \id ) \to \hil$ would be bounded. However, since $\lambda \in \sigma_{ap}(A)$ there exists $(u_n) \subset \dom(A), \|u_n\|=1,$ with $v_n:=(A-\lambda \id)u_n \to 0$. 
But then $w_n:= v_n/\|v_n\| \in \ran(A-\lambda \id), \|w_n\|=1$ and $\|(A-\lambda \id)^{-1} w_n\| \to \infty$, which leads to a contradiction.
\end{proof}

\begin{proof}[Proof of Theorem \ref{thm:ess}]
Using a suitable translation of the operator it is no restriction to consider the case $\lambda = 0$ only.
Let us assume that $\ran(A)$ is closed and derive a contradiction.

 First, since $0 \in \sigma_{ap}(A)$ by Corollary \ref{cor:1}, Lemma \ref{lem:2} shows that $A$ cannot be injective, so $\dim(\ker(A))>0$. Since $A$ is closed, its kernel is closed and we have 
$$ \hil = \ker(A) \oplus \ker(A)^\perp.$$
Let us set $\hil_1=\ker(A)$ and $\hil_2=\ker(A)^\perp$, which are both Hilbert spaces with the induced scalar product. Clearly, $\hil_1$ is an invariant subspace for $A$, $A|_{\hil_1}=0$ and $\num(A|_{\hil_1})=\{0\}$. Moreover, by Lemma \ref{lem:ker} we have $\ker(A) \subset \ker(A^*)$ and so  $$\ran(A)=\overline{\ran}(A)=\ker(A^*)^\perp \subset \ker(A)^\perp,$$ which shows that $\hil_2$ is an invariant subspace for $A$ as well.  Let us set $B=A|_{\hil_2}$, which is an injective and closed operator in $\hil_2$. Moreover, $\ran(B)=\ran(A)$ is closed. Now let us note that 
$$ \overline{\num}(A)=\overline{\operatorname{conv}}(\num(A|_{\hil_1}),\num(A|_{\hil_2}))=\overline{\operatorname{conv}}(\{0\},\num(B)).$$
Since $0$ (as a point of unilateral infinite curvature) is an extreme point of $\overline{\num}(A)$, but not a corner point, it follows that $0 \in \overline{\num}(B)$ and so $\cnum(A)=\cnum(B)$. Hence $\lambda=0$ is a point of unilateral infinite curvature of $\partial \num(B)$ as well and Corollary \ref{cor:1} implies that $0 \in \sigma_{ap}(B)$. But since $B$ is injective, Lemma \ref{lem:2} then implies that $\ran(B)$ is not closed, which leads to a contradiction.
\end{proof}

\section{Proof of theorem \ref{thm:main}}

We assumed that $\lambda$ is a point of unilateral infinite curvature of $\partial \num(A)$.  Using an affine  transformation of $A$, it is no restriction to assume that $\lambda=0$, that $\mr$ is a supporting line for $\cnum(A)$, that $\cnum(A) \subset \mc_+=\{ z : \Im(z)\geq 0 \}$ and that in case of a corner point the imaginary axis coincides with the axis of the smallest sector containing $\cnum(A)$. Moreover, it is no restriction to assume that
$0$ is a point of right-hand infinite curvature for $\partial \num(A)$.

Since we assumed that $\num(A)$ contains an interior point, there exists an interior point $\alpha \in \num(A)$ with
\begin{equation}
  \label{eq:24}
  0< \Re(\alpha)< 1, \quad 0 < \Im(\alpha) < 1 \quad \text{and} \quad |\alpha|<1
\end{equation}
and such that the half open line segment $(0, \alpha]$ is contained in $\num(A)$. Now let $0< \eps_n < 1$ be any null sequence (i.e. $\eps_n \to 0$ for $n \to \infty$). Then we can find $(u_n) \subset \dom(A), \|u_n\|=1,$ such that 
\begin{equation}
  \label{eq:21}
  \langle Au_n, u_n \rangle = \eps_n \alpha. 
\end{equation}
\begin{rem}
  In the bounded case, H\"ubner et al. now introduce a sequence $(e_n) \subset \hil$, with $\|e_n\|=1$ and $\langle u_n, e_n \rangle =0$, by setting
$$ Au_n =: \eps_n \alpha u_n + x_n e_n$$
and, using the boundedness of $A$ extensively, show that $x_n=\langle Au_n, e_n \rangle \to 0$. In particular, this implies that $Au_n \to 0$ and so $0 \in \sigma(A)$. In the unbounded case we will not be able to show that $Au_n \to 0$ and have to go along a different route.
\end{rem}
\begin{question}
  Is it true that $Au_n \to 0$ for $n \to \infty$?
\end{question}
The main tool in our proof of Theorem \ref{thm:main} is the following new result. Recall that by assumption we have $\dom(A) \subset \dom(A^*)$.
\begin{prop}\label{prop:1}
Let $(u_n)$ be as defined in (\ref{eq:21}) and let $(f_n) \subset \dom(A)$ such that
\begin{eqnarray*}
    \max(\sup_n \|f_n\|,\sup_n \|Af_n\|, \sup_n \|A^*f_n\|) < \infty.
\end{eqnarray*}
Then 
\[ \lim_{n \to \infty} \left( |\langle f_n, Au_n \rangle| + |\langle f_n, A^*u_n \rangle| \right) = 0. \]
\end{prop}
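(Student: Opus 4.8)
The plan is to reduce everything to two scalar limit relations for $a_n := \langle f_n, Au_n\rangle$ and $b_n := \langle f_n, A^*u_n\rangle$: first $b_n - a_n \to 0$, coming from the supporting line $\mr$, and then $a_n \to 0$, coming from the right-hand infinite curvature; together with $b_n - a_n \to 0$ this forces $|a_n| + |b_n| \to 0$. Since $\dom(A)\subset\dom(A^*)$ and $f_n, u_n \in \dom(A)$, the adjoint relations give $a_n = \langle A^*f_n, u_n\rangle$ and $b_n = \langle Af_n, u_n\rangle$. Hence the hypotheses $\sup_n\|A^*f_n\|<\infty$ and $\sup_n\|Af_n\|<\infty$ immediately make $(a_n)$, $(b_n)$, and $d_n := \langle Af_n, f_n\rangle$ bounded, a fact I use throughout.

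For the supporting-line step I use that $\cnum(A)\subset\mc_+$ makes the Hermitian form $B[f] := \Im\langle Af,f\rangle = \langle \frac{A-A^*}{2i}f, f\rangle$ nonnegative on $\dom(A)$. Here $B[u_n] = \eps_n\Im(\al)\to 0$ while $B[f_n]$ is bounded, so Cauchy--Schwarz for nonnegative forms gives $|B[f_n,u_n]|^2 \le B[f_n]\,B[u_n]\to 0$. A direct computation with the adjoint relations yields $B[f_n,u_n] = \frac{1}{2i}(b_n - a_n)$, so $\beta_n := b_n - a_n \to 0$.

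It remains to show $a_n\to 0$, which I would do by contradiction: assume $|a_n|\ge c>0$ along a subsequence. For $s\in\mc$ consider the Rayleigh quotient $z(s) := \langle A(u_n+sf_n), u_n+sf_n\rangle / \|u_n+sf_n\|^2 \in \cnum(A)$, whose numerator expands as $\eps_n\al + 2\Re(sa_n) + s\beta_n + |s|^2 d_n$. Choosing the phase of $s$ so that $sa_n = \rho|a_n|$ is real and positive (with $\rho = |s|\to 0$ to be fixed) pushes the real part of the numerator right by $2\rho|a_n|\ge 2c\rho$, while its imaginary part stays $\le \eps_n\Im(\al) + \rho|\beta_n| + \rho^2 B[f_n]$; since $\|u_n+sf_n\|^2 = 1 + O(\rho)$, this produces a point $z(s) = \xi + i\eta\in\cnum(A)$ with $\xi \ge \tfrac{c}{2}\rho>0$ and $\eta \le 2(\eps_n + \rho|\beta_n| + C\rho^2)$ for large $n$. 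Now I invoke the geometric content of $\gamma_l^+(0)=\infty$: since $0\in\partial\num(A)$, $\mr$ is supporting, and $\cnum(A)\subset\mc_+$, the lower boundary near $0$ is a convex graph $h$ with $h(\xi)/\xi^2\to\infty$ as $\xi\downarrow 0$, so for every $M$ there is $\delta_M>0$ with $\eta \ge M\xi^2$ for every point of $\cnum(A)$ having $0<\xi\le\delta_M$. Taking $\rho = \rho_n := \eps_n^{1/4} + |\beta_n|^{1/2}\to 0$ makes $\eps_n/\rho_n^2\to 0$ and $|\beta_n|/\rho_n\to 0$, so $\eta/\rho_n^2$ stays bounded while $M\xi^2/\rho_n^2 \ge Mc^2/4$; fixing $M$ larger than that bound contradicts $\eta\ge M\xi^2$ for large $n$. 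Hence $a_n\to 0$, and with $\beta_n\to 0$ also $b_n\to 0$, proving the claim.

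The delicate point is this last step: one must choose the perturbation $s=s_n$ so that the real part of the Rayleigh quotient moves to \emph{first} order in $\rho_n$ (to exploit $|a_n|\ge c$) while the imaginary part moves only to \emph{second} order (so that infinite curvature can crush it). This is exactly what forces the two-step structure — the smallness of $\beta_n$ from the supporting-line step is needed to ensure that the linear-in-$\rho_n$ contribution $\Im(s\beta_n)$ to $\eta$ is negligible — and it requires careful bookkeeping of the normalization $\|u_n+sf_n\|^2$ and of all error terms, which is legitimate precisely because $(a_n),(b_n),(d_n),(B[f_n])$ are bounded by the hypotheses on $f_n$.
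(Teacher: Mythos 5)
Your argument is correct, and it runs on the same engine as the paper's proof---perturb $u_n$ to $u_n+sf_n$, expand the Rayleigh quotient to second order in $|s|$, use $\cnum(A)\subset\mc_+$ to kill the imaginary part of the linear cross term and the right-hand infinite curvature to kill its real part---but the implementation differs in three genuine ways. First, you split the target into $b_n-a_n\to 0$ and $a_n\to 0$, whereas the paper aligns phases (choosing $v_n=Re^{i\theta_n}f_n$ so that $|\langle Af_n,u_n\rangle|+|\langle Au_n,f_n\rangle|=|\langle Av_n,u_n\rangle+\langle Au_n,v_n\rangle|$) and then shows the real and imaginary parts of that single sum vanish. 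Second, your supporting-line step is the abstract Cauchy--Schwarz inequality for the nonnegative Hermitian form $\Im\langle A\cdot,\cdot\rangle$ on $\dom(A)$, giving $|b_n-a_n|\le 2\sqrt{B[f_n]\,\eps_n\Im(\al)}$ in one line; the paper's Lemma \ref{lem3} obtains the same $O(\sqrt{\eps_n})$ bound by the $\pm$-perturbation computation with $u_n\pm\sqrt{\eps_n}c_nv_n$, which is Cauchy--Schwarz unwound, so the content is identical but your packaging is cleaner. Third, for the curvature step the paper fixes the perturbation size $\sqrt{\eps_n}$, chooses a sign $c_n$ to make the real cross term nonnegative, and estimates directly against $K(x_n)\to\infty$ (Lemmas \ref{lem:pos} and \ref{lem6}); you argue by contradiction with the adapted size $\rho_n=\eps_n^{1/4}+|\beta_n|^{1/2}$ and the reformulation $\eta\ge M\xi^2$ on the strip $0<\xi\le\delta_M$. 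Both are valid; yours makes the ``first order in $\Re$, second order in $\Im$'' tension explicit, the paper's avoids subsequences. Two points you should write out in a complete version: the identities $a_n=\langle A^*f_n,u_n\rangle$ and $b_n=\langle Af_n,u_n\rangle$ (this is exactly where $\dom(A)\subset\dom(A^*)$ and the bounds on $\|Af_n\|$, $\|A^*f_n\|$ enter), and the justification that $\gamma_l^+(0)=\infty$ yields $\eta\ge M\xi^2$ for \emph{all} points of $\cnum(A)$ with $0<\xi\le\delta_M$, not just boundary points---this holds because the lower boundary over that strip is the graph of the convex function $h(\xi)=\inf\{\eta:(\xi,\eta)\in\cnum(A)\}$, every point of the set lies above it, and $h(\xi)/\xi^2\to\infty$; this is the same geometric fact the paper encodes in its quantity $K(a)$.
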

\begin{rem}
  If $A \in \bdd(\hil)$, then we can choose $f_n=Au_n$ and $f_n=A^*u_n$, respectively, and the proposition implies that $Au_n \to 0$ and $A^*u_n \to 0$, recovering the known results mentioned above. 
\end{rem}
The proof of this proposition is rather lengthy and will be given below. First, let us show how the proposition can be used to prove Theorem \ref{thm:main}. 
\begin{proof}[Proof of Theorem \ref{thm:main}]
We want to prove that $0 \in \sigma(A)$. So let us assume that this is not the case, i.e. $A$ is boundedly invertible, and derive a contradiction. To this end, let us choose $f_n = A^{-1} u_n$, with $u_n$ as above. Then $(f_n) \subset \Dom(A), \|f_n\| \leq \|A^{-1}\|$ and $\|Af_n\| = 1$. Moreover, since $\dom(A) \subset \dom(A^*)$, the operator $A^*A^{-1}$ is defined on $\hil$. Since $A^{-1}$ is bounded and $A^*$ is closed, it is easy to see that also $A^*A^{-1}$ is closed and then the closed graph theorem implies that $A^*A^{-1} \in \bdd(\hil)$. In particular, this implies that
$ \|A^*f_n\| \leq \|A^*A^{-1}\|$. We can now apply Proposition \ref{prop:1} to conclude that 
$$ \lim_{n \to \infty} \left( |\langle f_n, Au_n \rangle| + |\langle f_, A^*u_n\rangle| \right) = 0.$$
But 
$$ \langle f_n, A^*u_n \rangle = \langle Af_n, u_n \rangle = \langle u_n, u_n \rangle = 1$$
for all $n$, which leads to the desired contradiction, showing that our assumption that $0 \notin \sigma(A)$ must have been wrong.  
  
\end{proof}
The proof of Proposition \ref{prop:1} requires a series of preparatory lemmas: First, let us introduce a sequence $(v_n) \subset \dom(A)$ (whose precise form will be chosen below) which satisfies
\begin{equation}
  \label{eq:8}
 \max(\sup_n \|v_n\|,\sup_n \|Av_n\|, \sup_n \|A^*v_n\|) \leq 1,
\end{equation} 
and 
\begin{equation}
  \label{eq:19}
  \sup_n |\Re( \langle Av_n,v_n \rangle)| \leq \Re(\alpha)/2,
\end{equation}
with $\alpha$ as given in (\ref{eq:24}). Moreover, let us define a sequence $(c_n) \subset \{-1,1\}$, depending on $(u_n)$ (as defined in (\ref{eq:21})) and $(v_n)$, as follows:
\begin{equation}
  \label{eq:13}
  c_n := \left\{ 
    \begin{array}{cl}
      1, & \text{if }  \Re( \langle Av_n, u_n \rangle + \langle Au_n,v_n \rangle) \geq 0 \\
     -1, & \text{if }  \Re( \langle Av_n, u_n \rangle + \langle Au_n,v_n \rangle) < 0. 
    \end{array}\right.
\end{equation}
Finally, let us set
\begin{equation}
  \label{eq:2}
  w_n:= u_n+\sqrt{\eps_n}c_nv_n \in \dom(A),
\end{equation}
where $0 < \eps_n < 1$ was defined above.  
\begin{lemma}\label{lem4}
For every $n \in \mn$
  \begin{equation}
    \label{eq:3}
     1-\sqrt{\eps_n} \leq \|w_n\| \leq 1 + \sqrt{\eps_n}.
  \end{equation}
\end{lemma}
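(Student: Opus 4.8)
The plan is to obtain both inequalities directly from the triangle inequality, since $w_n$ is defined in (\ref{eq:2}) as a perturbation of the unit vector $u_n$ by the small-norm term $\sqrt{\eps_n}c_n v_n$. First I would record the relevant bound on the perturbation: because $c_n \in \{-1,1\}$ we have $|c_n|=1$, and by (\ref{eq:8}) we have $\|v_n\| \leq 1$, so that $\|\sqrt{\eps_n}c_n v_n\| = \sqrt{\eps_n}\,|c_n|\,\|v_n\| \leq \sqrt{\eps_n}$.

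For the upper bound, the triangle inequality together with the normalization $\|u_n\|=1$ (from the construction of $u_n$ preceding (\ref{eq:21})) gives
\[ \|w_n\| \leq \|u_n\| + \|\sqrt{\eps_n}c_n v_n\| \leq 1 + \sqrt{\eps_n}. \]
For the lower bound, the reverse triangle inequality yields
\[ \|w_n\| \geq \|u_n\| - \|\sqrt{\eps_n}c_n v_n\| \geq 1 - \sqrt{\eps_n}, \]
again using $\|u_n\|=1$.

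There is essentially no obstacle here: the only inputs are the normalization $\|u_n\|=1$, the uniform bound $\|v_n\|\leq 1$ from (\ref{eq:8}), and $|c_n|=1$. The restriction $0<\eps_n<1$ plays no role in the estimate itself, though it is what later makes the lower bound $1-\sqrt{\eps_n}$ a useful (in particular, eventually strictly positive) quantity when (\ref{eq:3}) is applied to control $\|w_n\|$ away from zero.
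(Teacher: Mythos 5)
Your proof is correct and is exactly the paper's argument: the paper's one-line proof also just invokes the triangle inequality together with $\|u_n\|=1$ and $\|c_nv_n\|\leq 1$. Your version merely spells out the details.
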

\begin{proof}
Use the triangle inequality and the fact that $\|u_n\|=1$ and $\|c_nv_n\| \leq 1$.
\end{proof}
Since $\langle Au_n,u_n \rangle = \eps_n \alpha$ and $c_n^2=1$,  a short computation shows that
\begin{equation}
  \label{eq:26}
  \langle Aw_n, w_n \rangle = \eps_n \alpha + \sqrt{\eps_n} c_n \left( \langle A v_n, u_n \rangle +  \langle Au_n,v_n \rangle \right) + \eps_n \langle Av_n,v_n \rangle.  
\end{equation}
\begin{lemma}\label{lem5} 
For all $n \in \mn$ we have
\begin{equation}
  \label{eq:4}
  \left| \langle A w_n, w_n \rangle \right| \leq 4 \sqrt{\eps_n}. 
\end{equation}
\end{lemma}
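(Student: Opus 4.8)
The plan is to start directly from the explicit expansion (\ref{eq:26}) of $\langle Aw_n,w_n\rangle$ and estimate it by the triangle inequality, term by term. Since $|c_n|=1$, we get
\[
|\langle Aw_n,w_n\rangle| \leq \eps_n|\alpha| + \sqrt{\eps_n}\,|\langle Av_n,u_n\rangle + \langle Au_n,v_n\rangle| + \eps_n|\langle Av_n,v_n\rangle|.
\]
I would bound the first summand by $|\alpha|<1$ from (\ref{eq:24}) and the last by $|\langle Av_n,v_n\rangle| \leq \|Av_n\|\,\|v_n\| \leq 1$ via (\ref{eq:8}). Here I note that neither the sign choice (\ref{eq:13}) nor the real-part bound (\ref{eq:19}) appears to be needed for this particular lemma; these will presumably only enter the sharper estimates feeding into Proposition \ref{prop:1}.

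The only delicate term is the cross term $\langle Av_n,u_n\rangle + \langle Au_n,v_n\rangle$. The summand $\langle Av_n,u_n\rangle$ is harmless, since $|\langle Av_n,u_n\rangle| \leq \|Av_n\|\,\|u_n\| \leq 1$ by (\ref{eq:8}) and $\|u_n\|=1$. The main obstacle is $\langle Au_n,v_n\rangle$: in contrast to the bounded case, there is no a priori control on $\|Au_n\|$, so this quantity cannot be estimated head-on. The decisive move — and precisely the point where the standing hypothesis $\dom(A)\subset\dom(A^*)$ is used — is to transfer the operator onto the adjoint. Since $v_n \in \dom(A) \subset \dom(A^*)$, one has $\langle Au_n,v_n\rangle = \langle u_n, A^*v_n\rangle$, whence $|\langle Au_n,v_n\rangle| \leq \|u_n\|\,\|A^*v_n\| \leq 1$, again by (\ref{eq:8}). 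This yields $|\langle Av_n,u_n\rangle + \langle Au_n,v_n\rangle| \leq 2$.

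Combining these bounds gives $|\langle Aw_n,w_n\rangle| \leq \eps_n + 2\sqrt{\eps_n} + \eps_n = 2\eps_n + 2\sqrt{\eps_n}$. Finally, since $0<\eps_n<1$ we have $\eps_n < \sqrt{\eps_n}$, so $2\eps_n + 2\sqrt{\eps_n} \leq 4\sqrt{\eps_n}$, which is exactly (\ref{eq:4}). I expect the whole argument to be very short; its entire conceptual content lies in the adjoint manipulation, which clarifies in retrospect why the sequence $(v_n)$ was required to satisfy the bound $\sup_n\|A^*v_n\|\leq 1$ in (\ref{eq:8}) rather than merely a bound on $\|Av_n\|$.
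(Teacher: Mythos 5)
Your proof is correct and is essentially the paper's own argument: the paper's proof of Lemma~\ref{lem5} is exactly the triangle inequality applied to (\ref{eq:26}) together with Cauchy--Schwarz, using $|\alpha|<1$, $\|u_n\|=1$, $\eps_n<\sqrt{\eps_n}$ and $\max(\|v_n\|,\|Av_n\|,\|A^*v_n\|)\leq 1$. Your observation that the cross term $\langle Au_n,v_n\rangle$ must be handled via $\langle u_n,A^*v_n\rangle$ (which is why (\ref{eq:8}) includes the bound on $\|A^*v_n\|$) is precisely the point the paper leaves implicit.
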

\begin{proof}
This follows from (\ref{eq:26}) and the Cauchy-Schwarz inequality, using that 
$0< \eps_n < \sqrt{\eps_n} <1$, $|c_n|=1$, $\|u_n\| = 1, |\alpha| < 1$ and $\max(\|v_n\|,\|Av_n\|,\|A^*v_n\|) \leq 1$.
\end{proof}
\begin{lemma} \label{lem3}
For all $n \in \mn$ we have
\begin{equation}
  \label{eq:16}
   \Im( \langle A w_n, w_n \rangle) \leq 4 \eps_n
\end{equation}
and
\begin{equation}
  \label{eq:15} 
\left| \Im(\langle A v_n, u_n \rangle + \langle Au_n,v_n \rangle ) \right|\leq 2 \sqrt{\eps_n}.
\end{equation}
In particular, 
\[ \Im(\langle A v_n, u_n \rangle + \langle Au_n,v_n \rangle )  \to 0 \quad (n\to \infty).\]
\end{lemma}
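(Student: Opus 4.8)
The plan is to exploit the half-plane constraint $\cnum(A)\subset\mc_+$ not only through $w_n$ but also through the \emph{sign-flipped companion} $w_n':=u_n-\sqrt{\eps_n}\,c_nv_n\in\dom(A)$. Both vectors satisfy $\|w_n\|,\|w_n'\|\geq 1-\sqrt{\eps_n}>0$ exactly as in Lemma \ref{lem4}, so $\langle Aw_n,w_n\rangle$ and $\langle Aw_n',w_n'\rangle$ are nonnegative multiples (by $\|w_n\|^2$, resp.\ $\|w_n'\|^2$) of points of $\num(A)\subset\mc_+$; in particular their imaginary parts are nonnegative. Since $(-c_n)^2=1$, replacing $c_n$ by $-c_n$ in (\ref{eq:26}) yields the analogue of (\ref{eq:26}) for $w_n'$, so that, abbreviating $S_n:=\langle Av_n,u_n\rangle+\langle Au_n,v_n\rangle$ and taking imaginary parts,
\begin{align*}
\Im(\langle Aw_n,w_n\rangle) &= \eps_n\Im(\alpha)+\sqrt{\eps_n}\,c_n\Im(S_n)+\eps_n\Im(\langle Av_n,v_n\rangle),\\
\Im(\langle Aw_n',w_n'\rangle) &= \eps_n\Im(\alpha)-\sqrt{\eps_n}\,c_n\Im(S_n)+\eps_n\Im(\langle Av_n,v_n\rangle).
\end{align*}

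Next I would control the two diagonal terms. From (\ref{eq:24}) we have $\Im(\alpha)<1$, and from (\ref{eq:8}) together with Cauchy--Schwarz $|\langle Av_n,v_n\rangle|\leq\|Av_n\|\,\|v_n\|\leq 1$, so $\Im(\langle Av_n,v_n\rangle)\leq 1$. Hence the quantity $K_n:=\eps_n\Im(\alpha)+\eps_n\Im(\langle Av_n,v_n\rangle)$ obeys $K_n\leq 2\eps_n$. Writing $t_n:=\sqrt{\eps_n}\,c_n\Im(S_n)$, the two identities above become $\Im(\langle Aw_n,w_n\rangle)=K_n+t_n$ and $\Im(\langle Aw_n',w_n'\rangle)=K_n-t_n$.

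Now the nonnegativity of both imaginary parts pins down $t_n$ from both sides: $\Im(\langle Aw_n,w_n\rangle)\geq 0$ gives $t_n\geq -K_n\geq -2\eps_n$, while $\Im(\langle Aw_n',w_n'\rangle)\geq 0$ gives $t_n\leq K_n\leq 2\eps_n$. Thus $|t_n|\leq 2\eps_n$, i.e.\ $|\Im(S_n)|=|c_n\Im(S_n)|\leq 2\sqrt{\eps_n}$, which is (\ref{eq:15}); note it is precisely the division by $\sqrt{\eps_n}$ that converts the $\eps_n$-bound on $t_n$ into the $\sqrt{\eps_n}$-bound. Feeding $t_n\leq 2\eps_n$ back into $\Im(\langle Aw_n,w_n\rangle)=K_n+t_n$ yields $\Im(\langle Aw_n,w_n\rangle)\leq 2\eps_n+2\eps_n=4\eps_n$, which is (\ref{eq:16}). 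Finally, since $\eps_n\to 0$, the bound (\ref{eq:15}) immediately gives $\Im(S_n)\to 0$, the asserted limit.

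The individual estimates are routine; the one genuinely non-obvious step, and the only place where real thought is needed, is the decision to introduce the companion vector $w_n'$ with the flipped sign. Applied to $w_n$ alone, the half-plane condition supplies only the lower bound $t_n\geq -2\eps_n$, and the magnitude estimate of Lemma \ref{lem5} controls $\Im(\langle Aw_n,w_n\rangle)$ merely to order $\sqrt{\eps_n}$, which is far too weak. It is the \emph{second} half-plane constraint, coming from $w_n'$, that furnishes the matching upper bound $t_n\leq 2\eps_n$ and thereby upgrades both assertions to the sharp order $\eps_n$.
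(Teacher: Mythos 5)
Your proof is correct and follows essentially the same route as the paper: both introduce the sign-flipped companion $w_n'=u_n-\sqrt{\eps_n}c_nv_n$, use the identity (\ref{eq:26}) together with $\num(A)\subset\mc_+$ applied to $w_n$ and $w_n'$ to trap $\sqrt{\eps_n}c_n\Im(\langle Av_n,u_n\rangle+\langle Au_n,v_n\rangle)$ from both sides, and then bound the remaining terms by Cauchy--Schwarz. The only cosmetic difference is that you deduce (\ref{eq:16}) directly from the decomposition $K_n+t_n$ rather than citing (\ref{eq:15}) and (\ref{eq:25}), which amounts to the same estimate.
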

\begin{proof}
First, note that by (\ref{eq:26})
  \begin{eqnarray}
&& \Im( \langle A w_n, w_n \rangle) \nonumber \\ 
&=& \eps_n \Im(\alpha) +\sqrt{\eps_n}c_n \Im(\langle Av_n,u_n \rangle + \langle Au_n,v_n \rangle) + \eps_n \Im(\langle Av_n,v_n \rangle).        \label{eq:25}
  \end{eqnarray} 
Since $\num(A) \subset \mc_+$ by assumption, the left-hand side is non-negative, so we obtain
$$ 0 \leq \eps_n \Im(\alpha) +\sqrt{\eps_n} c_n\Im(\langle Av_n,u_n \rangle + \langle Au_n,v_n \rangle) + \eps_n \Im(\langle Av_n,v_n \rangle)$$ 
and so
$$ -c_n\Im(\langle Av_n,u_n \rangle + \langle Au_n,v_n \rangle) \leq \sqrt{\eps_n} ( \Im(\alpha) + \Im(\langle Av_n,v_n \rangle)).$$
Now we do the same computations with $w_n':=u_n-\sqrt{\eps_n}c_nv_n$ and arrive at
$$ c_n\Im(\langle Av_n,u_n \rangle + \langle Au_n,v_n \rangle) \leq \sqrt{\eps_n} ( \Im(\alpha) + \Im(\langle Av_n,v_n \rangle)),$$
so taken together, and using that $|c_n|=1$,  we obtain
$$ |\Im(\langle Av_n,u_n \rangle + \langle Au_n,v_n \rangle)| \leq \sqrt{\eps_n} ( \Im(\alpha) + \Im(\langle Av_n,v_n \rangle)).$$ 
Since $\max(\|v_n\|,\|Av_n\|) \leq 1$ and $|\alpha| < 1$, an application of Cauchy-Schwarz concludes the proof of (\ref{eq:15}). The validity of (\ref{eq:16}) follows from (\ref{eq:15}),(\ref{eq:25}) and a similar application of Cauchy-Schwarz.
\end{proof}
We assumed that $0$ is a point of right-hand infinite curvature for $\partial \num(A)$, which means that for every positive null sequence $(a_n)$  we have
$K(a_n) \to \infty$ for $n \to \infty$, where
\begin{small}
 \begin{equation*}
  K(a):= \inf \left\{ 
 \frac{\Im(\langle Av,v \rangle)}{\Re^2(\langle Av,v \rangle)} : v \in \Dom(A), \|v\|=1, 0 < |\langle Av,v \rangle| < a, \Re(\langle Av,v \rangle) > 0 \right\}.
\end{equation*}
\end{small}
In order to apply this curvature assumption in our proof of Proposition \ref{prop:1}, we first need the following result.
\begin{lemma} \label{lem:pos}
For all $n \in \mn$ we have
\begin{equation}
  \Re(\langle A w_n, w_n \rangle) > 0. 
\end{equation}
\end{lemma}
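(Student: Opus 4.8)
The plan is to prove that $\Re(\langle Aw_n, w_n \rangle) > 0$ by using the expansion \eqref{eq:26} together with the sign-choice of $c_n$ from \eqref{eq:13}. The key observation is that the definition of $c_n$ was precisely engineered so that the cross-term $c_n(\langle Av_n, u_n \rangle + \langle Au_n, v_n \rangle)$ has \emph{non-negative} real part: indeed, $c_n \Re(\langle Av_n, u_n \rangle + \langle Au_n, v_n \rangle) = |\Re(\langle Av_n, u_n \rangle + \langle Au_n, v_n \rangle)| \geq 0$. Taking real parts in \eqref{eq:26}, I would therefore write
\begin{equation*}
\Re(\langle Aw_n, w_n \rangle) = \eps_n \Re(\alpha) + \sqrt{\eps_n}\, c_n \Re(\langle Av_n, u_n \rangle + \langle Au_n, v_n \rangle) + \eps_n \Re(\langle Av_n, v_n \rangle).
\end{equation*}

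The first term $\eps_n \Re(\alpha)$ is strictly positive since $\Re(\alpha) > 0$ by \eqref{eq:24}. The middle term is non-negative by the choice of $c_n$, as just observed. The only potentially problematic term is the last one, $\eps_n \Re(\langle Av_n, v_n \rangle)$, which could be negative. Here is where the normalization \eqref{eq:19} enters: by construction $|\Re(\langle Av_n, v_n \rangle)| \leq \Re(\alpha)/2$. Hence I can bound the sum of the first and third terms from below by
\begin{equation*}
\eps_n \Re(\alpha) + \eps_n \Re(\langle Av_n, v_n \rangle) \geq \eps_n \Re(\alpha) - \eps_n \Re(\alpha)/2 = \eps_n \Re(\alpha)/2 > 0,
\end{equation*}
using $0 < \eps_n$. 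Combining this with the non-negativity of the middle term yields $\Re(\langle Aw_n, w_n \rangle) \geq \eps_n \Re(\alpha)/2 > 0$, which is the desired strict positivity.

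The proof is essentially bookkeeping: all three ingredients (the sign convention for $c_n$, the curvature-independent normalization of $v_n$ in \eqref{eq:19}, and the strict positivity of $\Re(\alpha)$) have been set up in advance precisely so that the three terms in the real part of \eqref{eq:26} can be controlled individually. I do not anticipate a genuine obstacle here; the only point requiring a small amount of care is making explicit that the defining inequality \eqref{eq:13} is exactly equivalent to $c_n \Re(\langle Av_n, u_n \rangle + \langle Au_n, v_n \rangle) \geq 0$, so that the cross-term cannot spoil positivity. Once that identification is made, the lower bound $\eps_n \Re(\alpha)/2$ follows immediately and the lemma is proved.
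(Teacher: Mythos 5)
Your proof is correct and follows exactly the same route as the paper: take real parts of (\ref{eq:26}), absorb the term $\eps_n\Re(\langle Av_n,v_n\rangle)$ into $\eps_n\Re(\alpha)$ via (\ref{eq:19}) to get the lower bound $\eps_n\Re(\alpha)/2$, and observe that the choice (\ref{eq:13}) makes the cross-term equal to $\sqrt{\eps_n}\,|\Re(\langle Av_n,u_n\rangle+\langle Au_n,v_n\rangle)|\geq 0$. No issues.
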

\begin{proof}
From (\ref{eq:26}) we obtain 
$$  \Re(\langle Aw_n, w_n \rangle) = \eps_n \Re(\alpha)+\sqrt{\eps_n} c_n \Re(\langle Av_n,u_n \rangle + \langle Au_n,v_n \rangle) + \eps_n \Re(\langle Av_n,v_n \rangle).$$
Estimating the last term in the sum by its negative absolute value and using assumption (\ref{eq:19}) we can estimate
\begin{eqnarray}
\Re( \langle A w_n, w_n \rangle) &\geq& 
\eps_n (\Re(\alpha))/2 + \sqrt{\eps_n} c_n \Re(\langle Av_n,u_n \rangle + \langle Au_n,v_n \rangle) \nonumber \\
&=& \eps_n (\Re(\alpha))/2 + \sqrt{\eps_n} |\Re(\langle Av_n,u_n \rangle + \langle Au_n,v_n \rangle)| > 0. \label{eq:hh2}
\end{eqnarray}
For the equality we used the definition of $(c_n)$ (see (\ref{eq:13})) and in the last step we used that $\Re(\alpha) > 0$ and $\eps_n > 0$.
\end{proof}
\begin{lemma}\label{lem6} 
We have
$$ \Re(\langle Av_n,u_n \rangle + \langle Au_n,v_n \rangle) \to 0 \quad (n \to \infty).$$
\end{lemma}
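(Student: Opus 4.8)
The plan is to feed the normalized vectors $w_n/\|w_n\|$ into the right-hand infinite curvature hypothesis and to play off the resulting lower bound for the ratio $\Im/\Re^2$ against the upper bounds for $\Im(\langle Aw_n,w_n \rangle)$ and the lower bound for $\Re(\langle Aw_n,w_n \rangle)$ that are already in hand. Write $S_n := \langle Av_n,u_n \rangle + \langle Au_n,v_n \rangle$, so that the quantity to be controlled is $\Re(S_n)$, and recall from the definition of $(c_n)$ that $c_n \Re(S_n) = |\Re(S_n)|$.

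First I would check that $g_n := w_n/\|w_n\|$ is an admissible competitor in the infimum defining $K(a_n)$ for a suitable positive null sequence $(a_n)$. Indeed $\|g_n\|=1$, and by Lemma \ref{lem:pos} we have $\Re(\langle Ag_n,g_n \rangle) = \Re(\langle Aw_n,w_n \rangle)/\|w_n\|^2 > 0$, so in particular $\langle Ag_n,g_n \rangle \neq 0$. Moreover, Lemma \ref{lem5} together with Lemma \ref{lem4} gives
\[
 |\langle Ag_n,g_n \rangle| = \frac{|\langle Aw_n,w_n \rangle|}{\|w_n\|^2} \leq \frac{4\sqrt{\eps_n}}{(1-\sqrt{\eps_n})^2} \to 0,
\]
so I may fix any positive null sequence $(a_n)$ strictly dominating the right-hand side (e.g. $a_n := 4\sqrt{\eps_n}/(1-\sqrt{\eps_n})^2 + \eps_n$). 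The curvature hypothesis then yields
\[
 \frac{\|w_n\|^2\, \Im(\langle Aw_n,w_n \rangle)}{\Re^2(\langle Aw_n,w_n \rangle)} = \frac{\Im(\langle Ag_n,g_n \rangle)}{\Re^2(\langle Ag_n,g_n \rangle)} \geq K(a_n) \to \infty.
\]

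From here the conclusion is a matter of combining estimates. Rearranging the last inequality and using $\Im(\langle Aw_n,w_n \rangle) \leq 4\eps_n$ from Lemma \ref{lem3} gives
\[
 \Re^2(\langle Aw_n,w_n \rangle) \leq \frac{\|w_n\|^2\, \Im(\langle Aw_n,w_n \rangle)}{K(a_n)} \leq \frac{4\eps_n (1+\sqrt{\eps_n})^2}{K(a_n)},
\]
where I used $\|w_n\|^2 \leq (1+\sqrt{\eps_n})^2$ from Lemma \ref{lem4}. On the other hand, the chain of inequalities in the proof of Lemma \ref{lem:pos} (see (\ref{eq:hh2})) shows $\Re(\langle Aw_n,w_n \rangle) \geq \sqrt{\eps_n}\,|\Re(S_n)| \geq 0$, so squaring yields $\eps_n |\Re(S_n)|^2 \leq \Re^2(\langle Aw_n,w_n \rangle)$. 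Putting the two bounds together and cancelling $\eps_n$,
\[
 |\Re(S_n)|^2 \leq \frac{4(1+\sqrt{\eps_n})^2}{K(a_n)} \to 0,
\]
since $K(a_n) \to \infty$ and $\eps_n \to 0$, which is exactly the assertion.

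The only genuinely delicate point is the verification that $g_n$ is a legitimate competitor for $K(a_n)$: one must simultaneously secure $\langle Ag_n,g_n \rangle \neq 0$ (from the strict positivity of the real part in Lemma \ref{lem:pos}) and $|\langle Ag_n,g_n \rangle| < a_n$ for a genuine null sequence, while keeping track of the $\|w_n\|^2$ normalization factor so that it lands harmlessly between the bounds of Lemma \ref{lem4}. Everything else is bookkeeping with the already-established Lemmas \ref{lem4}, \ref{lem5}, \ref{lem3} and \ref{lem:pos}.
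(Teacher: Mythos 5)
Your proof is correct and follows essentially the same route as the paper: feed $w_n/\|w_n\|$ into the curvature infimum, bound the numerator by Lemma \ref{lem3} and the denominator from below via (\ref{eq:hh2}), and let $K\to\infty$ do the rest. The only (harmless) cosmetic differences are that you square rather than take square roots and that you enlarge the threshold to $a_n$ to make the strict inequality $|\langle Ag_n,g_n\rangle|<a_n$ explicit, a small point the paper glosses over.
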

\begin{proof}
From Lemma \ref{lem5} and Lemma \ref{lem4} we know that for all $n \in \mn$   
$$   \left| \Big\langle A \frac{w_n}{\|w_n\|}, \frac{w_n}{\|w_n\|} \Big\rangle \right| \leq \frac{4 \sqrt{\eps_n}}{\|w_n\|^2} \leq \frac{4\sqrt{\eps_n}}{(1-\sqrt{\eps_n})^2}  =: x_n,$$
 so by the definition of $K(x_n)$ and the fact that $\Re( \langle A w_n, w_n \rangle)>0$ by Lemma \ref{lem:pos},  we see that 
  \begin{equation}
    \label{eq:7}
    \frac{\Im\left(\Big\langle A \frac{w_n}{\|w_n\|}, \frac{w_n}{\|w_n\|} \Big\rangle\right)}{\Re^2\left(\Big\langle A \frac{w_n}{\|w_n\|}, \frac{w_n}{\|w_n\|} \Big\rangle\right)} \geq K(x_n) \qquad (n \in \mn).
  \end{equation}
Since $\|w_n\|  \leq (1+\sqrt{\eps_n})$ this implies that 
\begin{equation*}
    \frac{\Im\left(\langle A w_n, w_n \rangle\right)}{\Re^2\left(\langle A w_n, w_n \rangle\right)} \geq \frac{K(x_n)}{(1+\sqrt{\eps_n})^2} \qquad (n \in \mn)
\end{equation*} 
and so we can use Lemma \ref{lem3} to obtain
\begin{equation}
  \label{eq:12}
   \Re\left(\langle A w_n, w_n \rangle\right)\cdot \sqrt{K(x_n)} \leq 2 \sqrt{\eps_n} (1+\sqrt{\eps_n}) \qquad(n \in \mn).
\end{equation} 
From (\ref{eq:hh2}) we know that $
\Re( \langle Aw_n, w_n \rangle) \geq \sqrt{\eps_n} |\Re(\langle Av_n,u_n \rangle + \langle Au_n,v_n \rangle)|$. Plugging this into (\ref{eq:12}) we arrive at
\begin{equation*}
   |\Re(\langle Av_n,u_n \rangle + \langle Au_n,v_n \rangle)| \sqrt{ K(x_n)} \leq 2(1+\sqrt{\eps_n}) \qquad(n \in \mn).
\end{equation*}
Here the right-hand side tends to $2$ for $n \to \infty$. Moreover, since $x_n \searrow 0$ we have $K(x_n) \to \infty$. But this implies that 
$\Re(\langle Av_n,u_n \rangle + \langle Au_n,v_n \rangle) \to 0$.
\end{proof}
We are finally prepared for the proof of Proposition \ref{prop:1}: To begin, note that we can assume that 
$$ s_1 := \sup_n \|f_n\| > 0 \quad \text{and} \quad s_2 := \max(\sup_n \|Af_n\|, \sup_n \|A^*f_n\|) > 0,$$
since otherwise the implication in the proposition is trivial. Let
$$ R:=\frac{\Re(\alpha)}{2 \max(s_1,s_2)} > 0,$$
where $\alpha$ was defined in (\ref{eq:24}). Now we choose $\theta_n \in [0,2\pi)$ such that the complex numbers $ z_n=Re^{i\theta_n}\langle Af_n,u_n \rangle$ and $\mu_n=Re^{-i\theta_n}\langle Au_n,f_n \rangle$ 
have the same phase. If one of $\langle Af_n,u_n \rangle$ or $\langle Au_n,f_n \rangle$ is zero, then we choose $\theta_n$ arbitrary. With $v_n:=Re^{i\theta_n}f_n$ we then obtain
\begin{eqnarray}
&& R \left( |\langle Af_n,u_n \rangle| + |\langle Au_n, f_n \rangle| \right) 
= |z_n| + |\mu_n|=|z_n+\mu_n| \nonumber\\
&=& \sqrt{\Re^2(\langle Av_n,u_n \rangle + \langle Au_n, v_n \rangle) + \Im^2(\langle Av_n,u_n \rangle + \langle Au_n, v_n \rangle))}. \label{eq:fin}
\end{eqnarray}  
Now note that 
$$ \max(\sup_n\|v_n\|, \sup_n \|Av_n\|, \sup_n \|A^*v_n\|) \leq \Re(\alpha)/2 \leq 1$$
and
$$ \sup_n |\Re( \langle Av_n, v_n \rangle )| \leq \sup_n \|Av_n\| \|v_n\| \leq (\Re(\alpha)/2)^2 \leq \Re(\alpha)/2,$$
i.e. with this choice of $(v_n)$ the estimates (\ref{eq:8}) and (\ref{eq:19}) are satisfied. We can thus apply Lemma \ref{lem6} and \ref{lem3} to conclude that the right-hand side in (\ref{eq:fin}) tends to $0$ for $n \to \infty$. Since $R>0$ this shows that 
$$  |\langle f_n,Au_n \rangle| + |\langle f_n, A^*u_n \rangle| = |\langle Af_n,u_n \rangle| + |\langle Au_n, f_n \rangle| \to 0 \quad (n \to \infty)$$
and concludes the proof of Proposition \ref{prop:1}. 
 
\section{Non-selfadjoint Schr\"odinger operators}

Now we are going to apply our results to non-selfadjoint Schr\"odinger operators $-\Delta+V$ in $L^2(\mr^d)$. We will make the following assumptions:
\begin{enumerate}
    \item[(A1)] $V: \mr^d \to \mc$ is a locally integrable function such that the sesquilinear form 
  \begin{eqnarray*}
    \mathcal{E}(f,g) &=& \langle \nabla f, \nabla g \rangle + \int_{\mr^d} V(x) f(x) \overline{g(x)} dx, \\
\dom(\mathcal{E}) &=& H^{1}(\mr^d) \cap \{ f \in L^2(\mr^d) : V|f|^2 \in L^1(\mr^d)\},
  \end{eqnarray*}
is closed and sectorial (since $V \in L^1_{loc}$ it is also densely defined). 
\end{enumerate} 
Given this assumption, by the first representation theorem (see \cite{kato}) we can uniquely associate to $\mathcal{E}$ an $m$-sectorial operator $H=:-\Delta +V$. The numerical range of $H$ is contained in a sector $\{ \lambda : |\arg(\lambda-\gamma)| \leq \alpha \}$ for some $\gamma \in \mr$ and $\alpha \in [0,\pi/2)$ and the spectrum of $H$ is contained in the closure of its numerical range. 
\begin{enumerate}
    \item[(A2)] $\dom(H) \subset \{ f \in L^2(\mr^d) : \Im(V) f \in L^2 \}$.
\end{enumerate}
Given (A1) and (A2), we have $\dom(H) \subset \dom(H^*)$, see \cite{MR3162253}, Lemma 6. 
\begin{ex}
 For instance, using Sobolev embedding theorems it can be shown that (A1) is satisfied if $V \in L^p(\mr^d) + L^\infty(\mr^d)$, where $p=d/2$ if $d\geq 3$, $p>1$ if $d=2$ and $p=1$ if $d=1$, and (A2) is satisfied if $\Im(V) \in L^q(\mr^d)$ where $q = d$ if $d \geq 3$, $q > 2$ if $d=2$ and $q=2$ if $d=1$. However, both these conditions are not necessary for (A1) and (A2) to hold. In particular, $V$ need not decay at infinity (in a generalized sense). To mention just one such example, note that in case $d=1$ the potential $V(x)=cx^2, \Re(c) > 0$, is among the potentials satisfying (A1) and (A2) and so the non-selfadjoint harmonic oscillator $H_cf=-f'' +cx^2f$, probably the most well-studied non-selfadjoint Schr\"odinger operator (see, e.g. \cite{MR1700903, MR1911854, MR2241978}),  can also be treated by our methods.
\end{ex}

Finally, in case $d \geq 2$ we need a further assumption, which allows one to invoke a unique continuation argument.
\begin{enumerate}
    \item[(A3)] $\Re(V) \in L^p_{loc}(\mr^d)$ where $p = d/2$ if $d \geq 3$ and $p > 1$ if $d=2$.
\end{enumerate}
The following theorem was proven in \cite{MR3162253}. Here $\sigma_p(H)$ denotes the set of all eigenvalues of $H$.
\begin{thm*}
  Assume $(A1)-(A3)$. If $a+ib \in \partial \num(H) \cap \sigma_p(H)$, then for every non-empty open set $U \subset \mr^d$ the set $ \{ x \in U : \Im(V(x))=b\}$
has non-zero Lebesgue measure.
\end{thm*}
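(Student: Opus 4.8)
The plan is to exploit the fact that a boundary eigenvalue forces the eigenfunction to be shared with the adjoint, and then to extract a pointwise identity for $\Im(V)$ on the support of the eigenfunction. So suppose $\lambda = a + ib \in \partial \num(H) \cap \sigma_p(H)$ and let $f \in \dom(H)$, $f \neq 0$, be a corresponding eigenfunction, $Hf = \lambda f$. Since $(A1)$ and $(A2)$ guarantee $\dom(H) \subset \dom(H^*)$, Lemma \ref{lem:ker} applies and yields $H^* f = \overline{\lambda} f = (a - ib) f$. Recalling that $H$ is the $m$-sectorial operator associated to $\mathcal{E}$, the adjoint $H^*$ is associated to the form with $V$ replaced by $\overline{V}$, so that $H = -\Delta + V$ and $H^* = -\Delta + \overline{V}$ both act on $f$.

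First I would subtract the two eigenvalue equations. Since $\dom(H) \subset \{ g : \Im(V) g \in L^2 \}$ by $(A2)$, the product $\Im(V) f$ lies in $L^2$ and
\[
(V - \overline{V}) f = Hf - H^*f = (\lambda - \overline{\lambda}) f,
\]
which reads $2i\, \Im(V)\, f = 2ib\, f$, i.e. $(\Im(V) - b) f = 0$ almost everywhere. Consequently $\Im(V(x)) = b$ for almost every $x$ with $f(x) \neq 0$, so that for any non-empty open $U \subset \mr^d$ one has, up to a null set,
\[
\{ x \in U : f(x) \neq 0 \} \subset \{ x \in U : \Im(V(x)) = b \}.
\]
It therefore suffices to show that the left-hand set has positive Lebesgue measure for every such $U$.

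The key reduction is that, on account of the identity $\Im(V) f = b f$, the eigenvalue equation collapses to a \emph{real}-potential Schr\"odinger equation: writing $V = \Re(V) + i\,\Im(V)$ and using $\Im(V) f = b f$ gives $(\lambda - V) f = (a - \Re(V)) f$ in $L^2$, hence
\[
-\Delta f = (a - \Re(V)) f
\]
distributionally on all of $\mr^d$, with $\Re(V) \in L^p_{loc}$ by $(A3)$ (the condition being vacuous, and the equation an ODE, when $d = 1$). Now I would invoke the weak unique continuation property for this equation: a nontrivial solution cannot vanish on a non-empty open set. If $\{ x \in U : f \neq 0 \}$ were a null set, then $f = 0$ almost everywhere on $U$, forcing $f \equiv 0$ and contradicting $f \neq 0$; hence this set has positive measure and the theorem follows.

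The main obstacle is this final unique continuation step, and the role of $(A3)$ is precisely to make it available: the exponents there ($p = d/2$ for $d \geq 3$, $p > 1$ for $d = 2$) are exactly those under which strong unique continuation for $-\Delta u = Wu$ with $W \in L^p_{loc}$ is known (Jerison--Kenig and its refinements), and strong unique continuation implies the weak form used here; in dimension one the statement is an elementary ODE uniqueness fact. The one point requiring care is the passage from the complex eigenvalue equation for $V$ to the real equation for $\Re(V)$, which is legitimate only because $\Im(V) f = b f$ holds as an identity in $L^2$, allowing the imaginary part of the potential to be eliminated globally rather than merely on the support of $f$.
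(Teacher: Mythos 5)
Your argument is correct and follows essentially the same route as the paper's source for this theorem (the paper itself gives no proof but cites \cite{MR3162253}): a boundary eigenfunction is also an eigenfunction of $H^*=-\Delta+\overline{V}$ by Lemma~\ref{lem:ker}, subtraction yields $\Im(V)f=bf$ in $L^2$, and weak unique continuation for $-\Delta f=(a-\Re(V))f$ under (A3) shows $f$ cannot vanish a.e.\ on any non-empty open set. No gaps beyond the standard technical hypotheses of the unique continuation theorem, which is exactly what (A3) is designed to supply.
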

\begin{rem}
  To be precise, in \cite{MR3162253} we proved this theorem under a slightly less general assumption, namely that in (A1) we have $D(\mathcal{E})=H^1(\mr^d)$. However, a short inspection of the relevant proofs shows that this change has no influence on the validity of the results  of \cite{MR3162253}.
\end{rem}
For simplicity, instead of working with the previous theorem directly, we will concentrate on one of its corollaries.  To this end, let us introduce two further conditions. It is no exaggeration to say that at least one of them (in particular the first) is satisfied by the majority of potentials arising in applications. 
\begin{enumerate}
    \item[(C1)] There exist $x_1,x_2 \in \mr^d, x_1 \neq x_2,$ such that $\Im(V)$ is continuous at $x_1$ and $x_2$ and $\Im(V(x_1)) \neq \Im(V(x_2))$.
      \item[(C2)] $V(x) \to 0$ for $\|x\| \to \infty$.
\end{enumerate}
Note that (C1) implies that $\num(H)$ does contain an interior point. Moreover, (C2) implies that $\sigma_{ess}(H)=\sigma_{ess}(-\Delta)=[0,\infty)$.
\begin{cor}\label{cor:2}
  Assume (A1)-(A3). Then the following holds:
  \begin{enumerate}
  \item If $V$ satisfies (C1), then $\sigma_p(H) \cap \partial \num(H) = \emptyset$.
  \item If $V$ satisfies (C2), then $\sigma_p(H) \cap \partial \num(H) \subset \mr$.  
  \end{enumerate}
\end{cor}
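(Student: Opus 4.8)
The plan is to deduce Corollary \ref{cor:2} from the cited theorem of \cite{MR3162253} together with Theorem \ref{thm:ess} of this paper. The key structural observation is that an eigenvalue lying on $\partial \num(H)$ is in particular a boundary point of the numerical range, and if it were a point of unilateral infinite curvature which is not a corner point, then Theorem \ref{thm:ess} would force it into $\sigma_{ess}(H)$ with non-closed range $\ran(H-\lambda\id)$ --- which is incompatible with it being an eigenvalue once one knows the kernel structure. So I expect the argument to split according to whether the relevant boundary point is a corner point or not, and to combine the unique-continuation consequence of the cited theorem with the curvature dichotomy.

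For part (i), I would argue by contradiction: suppose $a+ib \in \sigma_p(H) \cap \partial \num(H)$. The cited theorem then says that $\{x \in U : \Im(V(x)) = b\}$ has positive Lebesgue measure for \emph{every} nonempty open $U$. But condition (C1) produces points $x_1, x_2$ of continuity of $\Im(V)$ with distinct values; choosing $U$ to be a small ball around $x_j$ on which $\Im(V)$ stays close to $\Im(V(x_j))$, at most one of the two values $\Im(V(x_j))$ can equal $b$, so for the other index the set $\{x \in U : \Im(V(x)) = b\}$ is empty (or at least null), contradicting the theorem. Hence no such eigenvalue exists and $\sigma_p(H) \cap \partial \num(H) = \emptyset$.

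For part (ii), under (C2) I first record that $\sigma_{ess}(H) = [0,\infty) \subset \mr$. Suppose $\lambda = a+ib \in \sigma_p(H) \cap \partial \num(H)$ with $b \neq 0$; I must derive a contradiction. Since $\lambda$ is an eigenvalue, $\ran(H-\lambda\id)$ cannot be dense-complemented in the pathological way Theorem \ref{thm:ess} describes, so I would like to conclude that $\lambda$ is \emph{not} a point of unilateral infinite curvature that fails to be a corner point: concretely, Theorem \ref{thm:ess} asserts that such a point lies in $\sigma_{ess}(H) = [0,\infty)$, forcing $b = 0$. Thus it remains to rule out the case where $\lambda$ is a corner point of $\partial \num(H)$ with $b \neq 0$. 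Here I would invoke the cited theorem again: a corner eigenvalue still satisfies the conclusion that $\{x : \Im(V(x)) = b\}$ meets every open set in positive measure, but (C2) gives $V(x) \to 0$, hence $\Im(V(x)) \to 0$, so taking $U$ a large exterior region $\{\|x\| > R\}$ on which $|\Im(V)| < |b|$ makes that set empty when $b \neq 0$ --- a contradiction. Combining the two exclusions yields $b = 0$, i.e. $\lambda \in \mr$.

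The main obstacle I anticipate is the case analysis at boundary points of the numerical range, specifically checking that the curvature hypotheses of Theorem \ref{thm:ess} are available. An eigenvalue on $\partial \num(H)$ need not a priori be a point of unilateral infinite curvature, so Theorem \ref{thm:ess} does not directly apply to every such point; the genuinely delicate step is to handle boundary eigenvalues of \emph{finite} curvature, which by Theorem \ref{thm:ev} and Lemma \ref{lem:upper} correspond to a non-degenerate disk inscribed at $\lambda$. I expect that for these, Theorem \ref{thm:ev} already forces $\lambda$ to be an eigenvalue and one then leans entirely on the unique-continuation theorem of \cite{MR3162253} (and conditions (C1)/(C2)) to exclude them, so that in the end the curvature machinery of Theorem \ref{thm:ess} is needed only to dispose of the infinite-curvature non-corner case in part (ii). Making this split clean, and verifying that $H$ satisfies the standing hypotheses $\dom(H) \subset \dom(H^*)$ and $\sigma(H) \subset \cnum(H)$ required by Theorem \ref{thm:ess} (both guaranteed here by (A1), (A2), and $m$-sectoriality), is where the care lies.
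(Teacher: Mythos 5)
Your two concrete arguments---the small ball around a continuity point of $\Im(V)$ for (i), and the exterior region $\{\|x\|>R\}$ for (ii)---are correct and complete, and they are exactly what is needed: the theorem quoted from \cite{MR3162253} applies to \emph{every} point of $\partial \num(H)\cap\sigma_p(H)$, with no curvature hypothesis whatsoever, so once you have it, (i) and (ii) follow in two lines each. The paper itself does not reprove this; it simply cites \cite{MR3162253}, Corollaries~6 and~7, which are these same deductions.

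Where your proposal goes astray is in the surrounding scaffolding. The entire detour through Theorem~\ref{thm:ess} and the trichotomy ``corner point / unilateral infinite curvature non-corner / finite curvature'' is superfluous, and parts of it are actually wrong: non-closedness of $\ran(H-\lambda\id)$ is perfectly compatible with $\lambda$ being an eigenvalue (an eigenvalue only requires a nontrivial kernel), so the claim that Theorem~\ref{thm:ess} is ``incompatible with it being an eigenvalue'' does not hold; and the worry about ``boundary eigenvalues of finite curvature'' in your last paragraph is moot, since the unique-continuation theorem already covers them. Moreover, invoking Theorem~\ref{thm:ev} there is circular, as you note the eigenvalue property is the hypothesis, not something to be derived. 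If you strip the proposal down to its second paragraph and the final exterior-region argument of the third, you have a clean and correct proof; as written, the extra case analysis obscures it and introduces false statements that a reader would have to discount.
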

\begin{proof}
  See \cite{MR3162253}, Corollary 6 and 7, respectively.
\end{proof}
Now we are prepared for our first theorem about points of infinite curvature of $\partial \num(H)$. In particular, it provides a necessary criterion for the closedness of $\num(H)$. 
\begin{thm}
  Assume (A1)-(A3) and (C1). If $\lambda \in \partial \num(H) \cap \num(H)$,  then the upper curvature of $\partial \num(H)$ at $\lambda$ is finite. In particular, if $\partial \num(H)$ has a point of infinite upper curvature, then $\num(H)$ is not closed.
\end{thm}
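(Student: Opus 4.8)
The plan is to argue by contradiction, feeding the abstract eigenvalue result of Theorem \ref{thm:ev} into the Schr\"odinger-specific exclusion provided by Corollary \ref{cor:2}(1). First I would record that $H$ fits the standing framework of Section 3: being $m$-sectorial, $H$ is densely defined and closed; its numerical range lies in a proper sector $\{\lambda : |\arg(\lambda - \gamma)| \leq \alpha\}$ with $\alpha < \pi/2$, so in particular $\num(H) \neq \mc$; and by the remark following (C1), assumption (C1) guarantees that $\num(H)$ has an interior point. Hence Theorem \ref{thm:ev} is applicable with $A = H$.

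For the main assertion I would suppose, toward a contradiction, that $\lambda \in \partial \num(H) \cap \num(H)$ is a point of infinite upper curvature. Since $\lambda \in \num(H)$, Theorem \ref{thm:ev} immediately yields that $\lambda$ is an eigenvalue of $H$, that is, $\lambda \in \sigma_p(H)$. But then $\lambda \in \sigma_p(H) \cap \partial \num(H)$, a set which is empty by Corollary \ref{cor:2}(1) under assumption (C1). This contradiction forces the upper curvature of $\partial \num(H)$ at $\lambda$ to be finite.

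For the concluding ``in particular'' statement I would argue again by contraposition. If $\num(H)$ were closed, then $\num(H) = \cnum(H)$, so that $\num(H)$ would contain its entire topological boundary; in particular $\partial \num(H) \subset \num(H)$. A point $\lambda$ of infinite upper curvature lying on $\partial \num(H)$ would then satisfy $\lambda \in \partial \num(H) \cap \num(H)$, and the first part of the theorem would force the upper curvature at $\lambda$ to be finite, contradicting the assumption. Hence $\num(H)$ cannot be closed.

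I do not expect any substantial obstacle here: the statement is essentially a corollary assembled from Theorem \ref{thm:ev} and Corollary \ref{cor:2}. The only step requiring a little care is verifying that $H$ genuinely meets the hypotheses under which those two results were established — namely that $H$ is densely defined and closed, that $\num(H)$ has an interior point and is not all of $\mc$, and that (A1)--(A3) and (C1) are in force — all of which follow at once from $m$-sectoriality and the standing assumptions.
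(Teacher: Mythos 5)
Your proposal is correct and follows exactly the paper's route: the paper proves this theorem as an immediate consequence of Theorem \ref{thm:ev} combined with Corollary \ref{cor:2}(1), which is precisely the contradiction you set up. Your verification of the standing hypotheses and the contrapositive for the ``in particular'' clause are just the details the paper leaves implicit.
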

\begin{proof}
This is an immediate consequence of Theorem \ref{thm:ev} and Corollary \ref{cor:2}. 
\end{proof}
\begin{ex}\label{ex:5}
Let $W \in C_0^\infty(\mr^d, \mr_+), W \neq 0$ and consider $H=-\Delta + (1+i)W$. Then
\begin{eqnarray*}
 \num(H) &=& \{ \|\nabla f \|^2 + \langle Wf,f\rangle + i \langle Wf,f \rangle : f \in \dom(H), \|f\|=1\}\\
&\subset& \{ x + iy : x \geq y \geq 0\}.  
\end{eqnarray*}
On the other hand, we have $\sigma_{ess}(H)=[0,\infty) \subset \cnum(H)$. This shows that $0$ is a corner point of $\partial \num(H)$. In particular, $\num(H)$ is not closed.
\end{ex}
\begin{question}
The previous theorem provides a necessary criterion for the closedness of $\num(H)$. Is it possible to obtain some nice sufficient conditions as well?
\end{question}
Before stating our second result, let us remark that while generally it need not be true that $\sigma(H)$ is the union of the discrete spectrum $\sigma_{d}(H)$ (which consists of all isolated eigenvalues of finite algebraic multiplicity) and the essential spectrum of $H$, for boundary points of the spectrum we do have that 
$$ \partial \sigma(H) \subset \sigma_d(H)\: \dot\cup \:\sigma_{ess}(H).$$
This follows from the fact that for an open, connected component $U$ of $\mc \setminus \sigma_{ess}(H)$ we either have $U \subset \sigma(H)$ or $\sigma(H) \cap U \subset \sigma_d(H)$, see \cite{b_Davies}. If $\lambda \in \partial \sigma(H) \setminus \sigma_{ess}(H)$ and $U$ denotes the component of $\mc \setminus \sigma_{ess}(H)$ that contains $\lambda$, then obviously the first case cannot happen and so $\lambda \in \sigma_d(H)$.

The next theorem will show that in Example \ref{ex:5} it is no coincidence that the corner point is an element of the essential spectrum of $H$.
\begin{thm}\label{thm:5}
  Assume (A1)-(A3) and (C1). If $\lambda \in \partial \num(H)$ is a point of unilateral infinite curvature, then $\lambda \in \sigma_{ess}(H)$. In particular, if $\sigma_{ess}(H)=\emptyset$, then $\partial \num(H)$ does not have points of unilateral infinite curvature.
\end{thm}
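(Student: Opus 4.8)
The plan is to route both cases---$\lambda$ a corner point and $\lambda$ not a corner point---through a single argument, rather than trying to apply Theorem~\ref{thm:ess}, which explicitly excludes corner points and would leave the corner case untreated. First I would record that $H$ meets all the standing hypotheses of Sections 3 and 4: by (A1) and (A2) we have $\dom(H) \subset \dom(H^*)$, the set-up following (A1) gives $\sigma(H) \subset \cnum(H)$, and (C1) guarantees that $\num(H)$ has an interior point (while $\num(H) \neq \mc$ since it sits in a proper sector). Thus the curvature notions of Section~2 apply to $\partial \num(H) = \partial \cnum(H)$, and both Theorem~\ref{thm:main} and Corollary~\ref{cor:2} are available.

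Given that $\lambda \in \partial \num(H)$ is a point of unilateral infinite curvature, Theorem~\ref{thm:main} immediately yields $\lambda \in \sigma(H)$. Next I would localize $\lambda$ on the boundary of the spectrum: since $\sigma(H) \subset \cnum(H)$ and $\lambda \in \partial \cnum(H)$, every neighbourhood of $\lambda$ meets the complement of $\cnum(H)$, and hence the complement of $\sigma(H)$; combined with $\lambda \in \sigma(H)$ this gives $\lambda \in \partial \sigma(H)$.

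Now I invoke the decomposition of the boundary spectrum noted just before the theorem, namely $\partial\sigma(H) \subset \sigma_d(H)\,\dot\cup\,\sigma_{ess}(H)$, so that $\lambda$ lies in exactly one of these two sets. To finish I would rule out the discrete part: every element of $\sigma_d(H)$ is an eigenvalue, so $\sigma_d(H) \subset \sigma_p(H)$, and Corollary~\ref{cor:2}(1) (which is where (A3) and (C1) enter) states that $\sigma_p(H) \cap \partial \num(H) = \emptyset$. Since $\lambda \in \partial \num(H)$, this forces $\lambda \notin \sigma_d(H)$, and therefore $\lambda \in \sigma_{ess}(H)$. The ``in particular'' statement is then immediate, since $\sigma_{ess}(H) = \emptyset$ would leave no room for any such $\lambda$.

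The step that needs the most care---and the only real conceptual obstacle---is recognizing that Theorem~\ref{thm:ess} is \emph{not} the right tool here, because unilateral infinite curvature includes corner points, which that theorem excludes. The resolution is to trade the direct ``range-not-closed'' argument for the softer combination of Theorem~\ref{thm:main} (spectral membership for \emph{all} points of unilateral infinite curvature, corners included) with the exclusion of eigenvalues on $\partial\num(H)$ afforded by Corollary~\ref{cor:2}(1); the disjoint boundary-spectrum decomposition then splits $\partial\sigma(H)$ cleanly into its discrete and essential parts, and only the essential part survives.
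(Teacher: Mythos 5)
Your proof is correct and follows essentially the same route as the paper: the paper cites Corollary~\ref{cor:1} (which is exactly Theorem~\ref{thm:main} plus the localization $\lambda \in \partial\sigma(H)$ that you spell out), then combines the decomposition $\partial\sigma(H) \subset \sigma_d(H)\,\dot\cup\,\sigma_{ess}(H)$ with Corollary~\ref{cor:2}(1) to exclude the discrete spectrum. Your observation that Theorem~\ref{thm:ess} is deliberately avoided because it excludes corner points is also consistent with the paper's argument.
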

\begin{proof}
Corollary \ref{cor:1} implies that $\lambda \in \partial \sigma(H) \cap \partial \num(H)$. Since $\sigma_d(H) \cap \partial \num(H) = \emptyset$ by Corollary \ref{cor:2}, we must have $\lambda \in \sigma_{ess}(H)$ by the discussion preceeding the theorem. 
\end{proof}
\begin{ex}
It is well known that the non-selfadjoint harmonic oscillator $H_cf = -f'' + cx^2f, \Re(c) > 0$ has compact resolvents and so $\sigma_{ess}(H_c)=\emptyset$. The previous theorem implies that  $\partial \num(H_c)$ does not have points of unilateral infinite curvature. In this case this does not come by surprise since $H_c$ is one of the few operators whose numerical range is actually known. As has been shown in \cite{MR1911854}, we have
$$ \num(H_c) = \{ t_1 +ct_2 : t_1,t_2 \geq 0, t_1t_2 \geq 1/4\}.$$
In particular, we see that in this case $\num(H_c)$ is closed.
\end{ex}
We conclude with a result about potentials satisfying (C2). Here we assume that  $\num(H)$ does contain an interior point.
\begin{thm}
    Assume (A1)-(A3) and (C2). Let $\lambda_0:=\inf \left( \sigma(H) \cap (-\infty,0] \right)$. Then the following holds:
\begin{enumerate}
    \item If $\lambda_0 \notin \partial \num(H)$, then $\partial \num(H)$ does not have points of unilateral infinite curvature.
\item If $\lambda_0 \in \partial \num(H)$, then $\lambda_0$ is the only possible point of unilateral infinite  curvature of $\partial \num(H)$.
\end{enumerate}
\end{thm}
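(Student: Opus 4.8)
The plan is to combine the general theory of unilateral infinite curvature points (Theorem~\ref{thm:main} and Theorem~\ref{thm:ess}) with the specific spectral information about Schr\"odinger operators satisfying (C2). The decisive structural fact under (C2) is that $V(x) \to 0$ as $\|x\| \to \infty$, which forces $\sigma_{ess}(H) = \sigma_{ess}(-\Delta) = [0,\infty)$; consequently every point of the spectrum lying in $\mc \setminus [0,\infty)$ is either resolvent or else in the discrete spectrum $\sigma_d(H)$. I would first invoke Corollary~\ref{cor:1}: any point $\lambda \in \partial \num(H)$ of unilateral infinite curvature satisfies $\lambda \in \sigma_{ap}(H)$, and since $\sigma(H) \subset \cnum(H)$, in fact $\lambda \in \partial \sigma(H)$. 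The discussion preceding Theorem~\ref{thm:5} then gives $\partial \sigma(H) \subset \sigma_d(H) \,\dot\cup\, \sigma_{ess}(H)$, so $\lambda \in \sigma_d(H) \cup [0,\infty)$.

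The next step is to eliminate the discrete spectrum from consideration, and here is where (C2) enters through Corollary~\ref{cor:2}(2). Under (C2) one has $\sigma_p(H) \cap \partial \num(H) \subset \mr$, so any eigenvalue on the boundary of the numerical range must be real. In particular $\sigma_d(H) \cap \partial \num(H) \subset \mr$. Combining this with the previous paragraph, a unilateral infinite curvature point $\lambda$ must lie in $\mr$: either $\lambda \in [0,\infty) = \sigma_{ess}(H)$, or $\lambda$ is a real discrete eigenvalue in $(-\infty,0) \cap \partial \num(H)$. Thus the only candidates for such a point are real, and the only possibly-nonreal-excluding analysis reduces to locating which real boundary points can carry unilateral infinite curvature.

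I would then argue that among the real candidate points, the only one that can be a discrete eigenvalue and simultaneously an extreme boundary point of the numerical range is $\lambda_0 := \inf(\sigma(H) \cap (-\infty,0])$. Since $\cnum(H)$ is convex and contained in a sector with real vertex, its intersection with $(-\infty, 0]$ on the boundary is controlled by the leftmost point of the real part of the spectrum; any point of unilateral infinite curvature is an extreme point of $\cnum(H)$, and a real discrete eigenvalue $\mu < \lambda_0$ on the boundary would contradict the minimality defining $\lambda_0$, while interior real points cannot be boundary points at all. For part~(1), if $\lambda_0 \notin \partial \num(H)$, then there are no discrete-eigenvalue candidates available on the boundary in $(-\infty,0)$, and the essential spectrum $[0,\infty)$ contributes its points to $\partial\num(H)$ only through its infimum $0$ if at all; the convexity geometry then rules out any unilateral infinite curvature point, giving the claim. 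For part~(2), the same analysis pins the unique possible location to $\lambda_0$ itself.

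The main obstacle I anticipate is the geometric step identifying $\lambda_0$ as the \emph{unique} possible location, rather than merely confining the candidates to $\mr$. The subtlety is that a priori the boundary $\partial \num(H)$ could meet the real axis in a whole segment or touch $[0,\infty)$ at several points, and one must rule out that an interior essential-spectrum value, or a second real eigenvalue, provides a competing unilateral infinite curvature point. Resolving this requires carefully using that $\lambda$ must be an \emph{extreme} point of the convex set $\cnum(H)$ (as a point of unilateral infinite curvature), together with the fact that the real part of $\cnum(H)$ is a half-line $[\gamma,\infty)$-type set whose only extreme real boundary points are the endpoints; combining this with the two disjoint spectral possibilities $\sigma_d$ and $[0,\infty)$ should force the single candidate $\lambda_0$.
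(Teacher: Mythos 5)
Your proposal is correct and follows essentially the same route as the paper: Corollary \ref{cor:1} places a unilateral infinite curvature point $\lambda$ in $\partial\sigma(H)\subset\sigma_d(H)\,\dot\cup\,[0,\infty)$, Corollary \ref{cor:2}(2) forces $\lambda$ to be real (hence $\lambda\in\sigma_d(H)\cap(-\infty,0)$ or $\lambda\in[0,\infty)$, so $\lambda\geq\lambda_0$), and since $[\lambda_0,\infty)\subset\cnum(H)$ while $\lambda$ must be an extreme point of $\cnum(H)$, convexity pins $\lambda=\lambda_0$. The geometric step you flag as the main obstacle is exactly the paper's closing remark ``by convexity of $\num(H)$ we thus obtain $\lambda=\lambda_0$,'' and your extremality argument resolves it in the intended way.
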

\begin{proof}
From Theorem \ref{thm:main} and the above discussion we know that if $\lambda \in \partial \num(H)$ is a point of unilateral infinite curvature, then $\lambda \in \sigma_{ess}(H) \cup \sigma_d(H) = [0,\infty) \cup \sigma_d(H)$. But from Corollary \ref{cor:2} we know that $\sigma_p(H) \cap \partial \num(H) \subset \mr$, so $\sigma_d(H) \cap \partial \num(H) \subset (-\infty,0)$. By convexity of $\num(H)$ we thus obtain $\lambda = \lambda_0$.
\end{proof} 
\begin{rem}
  In view of the previous theorem, we see that in Example \ref{ex:5} the corner point at $0$ is actually the only point of unilateral infinite curvature of $\partial \num(H)$.
\end{rem}
\begin{question}
Above we have seen an example where $\partial \num(H)$ has no point of infinite curvature and an example where it has exactly one such point. For every positive integer $n$, is it possible to construct $H$ such that $\partial \num(H)$ has exactly $n$ points of infinite curvature?
\end{question}

\section*{Acknowledgments}
  I would like to thank the anonymous referee for his or her many helpful comments and suggestions.

\def\cydot{\leavevmode\raise.4ex\hbox{.}} \def\cprime{$'$}

\end{document}